\newtheorem{theorem}{Theorem}[section]
\newtheorem{lemma}[theorem]{Lemma}
\newtheorem{prop}{Proposition}[section]
\newtheorem{assumption}{Assumption}[section]
\newtheorem{remark}{Remark}[section]
\numberwithin{equation}{section}
\title{Boundary Conditions for Hyperbolic Relaxation Systems with Characteristic Boundaries of Type I}
\author{
Yizhou Zhou\thanks{E-mail: zhouyz16@mails.tsinghua.edu.cn}\\
\small{\textit{Zhou Pei-Yuan Center for Applied Mathematics}},\\
\small{\textit{Tsinghua University, Beijing 100084, China.}}\\[3mm]
Wen-An Yong\thanks{E-mail: wayong@tsinghua.edu.cn}\\
{\small{\textit{Department of Mathematical Sciences}}},\\
{\small{\textit{Tsinghua University, Beijing 100084, China.}}} \\
}
\date{\today}
\begin{document}
\maketitle{}
\begin{abstract}
This work is concerned with boundary conditions
for one-dimensional hyperbolic relaxation systems with characteristic boundaries. 
We assume that the relaxation system satisfies the structural stability condition proposed 
% by W.-A. Yong (1999, {\it J. Differential Equations} {\bf 155}(1), 89–132)
by the second author previously 
and the boundary is characteristic of type I (characteristic for the relaxation system but non-characteristic for the corresponding equilibrium system). 
For this kind of characteristic initial-boundary-value problems, we propose a modified Generalized Kreiss condition (GKC). 
This extends the GKC proposed by 
%the second author
W.-A. Yong (1999, {\it Indiana Univ. Math. J.} {\bf 48}(1), 115–137) 
for the non-characteristic boundaries to the present characteristic case. 
Under this modified GKC, we derive the reduced boundary condition and verify its validity by combining an energy estimate with the Laplace transform. 
Moreover, we show the existence of boundary-layers for nonlinear problems. 
% The present results are expected to be used as theoretical
% criteria to construct relaxation approximations for IBVPs of
% conservation laws, which are of practical interest.
\end{abstract}

\hspace{-0.5cm}\textbf{Keywords:}
\small{hyperbolic relaxation systems, boundary conditions, characteristic boundaries, generalized Kreiss condition, energy estimate, Laplace transform}\\

\hspace{-0.5cm}\textbf{AMS subject classification:} \small{35L50, 76N20}

%%%% Start %%%%%%
\newpage
\section{Introduction}\label{section1} 
\ 
This work is concerned with boundary conditions for hyperbolic relaxation systems
\begin{equation}\label{1.1}
  U_t+\sum_{j=1}^{d}A_j(U)U_{x_j}=\frac{Q(U)}{\epsilon}
\end{equation}
defined for $(x,t)=(x_1,x_2,...,x_d,t)\in ~\Omega\times [0,T)$.  
Here $\Omega$ is a proper subset of $R^d$, $U=U(x,t)\in R^n$ is unknown, $A_j(U)\in R^{n\times n}$ ($j=1,2,...,d$) and $Q(U)\in R^n$ are given smooth functions of $U\in G$ (an open subset of $R^n$ called state space), and $\epsilon$ is a small positive parameter. 
This kind of equations with the small parameter describes various non-equilibrium phenomena. Important examples occur in the kinetic theory \cite{Ga,Le,CFL,Mi}, thermal non-equilibrium flows \cite{Ze}, chemically reactive flows \cite{GM}, compressible viscoelastic flows \cite{Y3,CS}, nonlinear optics \cite{Ha}, non-equilibrium thermodynamics \cite{ZHYY}, traffic flows \cite{BK1,BK2}, and so on.

For such a small parameter problem, a main interest is to understand the limit as $\epsilon$ goes to zero---the so-called zero relaxation limit \cite{CLL,Y1}.
For initial-value problems of \eqref{1.1}, there have been a lot of studies and the interested reader can refer to \cite{Da,Na,Y7} and the references cited therein. Particularly, it was proved in \cite{Y1} that, as $\epsilon$ goes to zero, the solution $U^{\epsilon}$ converges to that of the corresponding equilibrium system under the structural stability condition proposed therein. Here the equilibrium system is a hyperbolic system of first-order partial differential equations and governs the limit. 

On initial-boundary-value problems (IBVPs) of \eqref{1.1}, the literature seems quite rare. The first systematical work is \cite{Y2} where the main issues were well elaborated. In \cite{Y2}, it was observed that the structural stability condition is not enough to ensure the existence of the zero relaxation limit. To attack this problem, the second author of this paper proposed the Generalized Kreiss Condition (GKC) imposed on the boundary conditions for relaxation systems, while the Kreiss condition is for (multi-dimensional) hyperbolic systems without considering the relaxation effects \cite{Kr,GKO}. Under the GKC and the hypothesis that the boundary is non-characteristic for both the relaxation system and its equilibrium system, the reduced boundary condition satisfied by the limit was found and was proved to be well-posed together with the equilibrium system. 
However, the non-characteristic hypothesis required in \cite{Y2} is too restrictive to include many important physical cases \cite{MO,Y4}. 

The purpose of this project is to remove or weaken the non-characteristic hypothesis. Namely, the boundaries may be characteristic for the relaxation system or for its equilibrium system. Thus
there are at least the following possibilities:
\begin{enumerate}
  \item[(I)] The boundary is characteristic for the relaxation system but is non-characteristic for the equilibrium system;
  \item[(II)] The boundary is non-characteristic for the relaxation system but is characteristic for the equilibrium system.
\end{enumerate}
It turns out that the above two cases have different features and difficulties. 
Here we only consider the characteristic boundaries of type (I) and a subsequent work will be devoted to the type (II). 

This paper focuses on the one-dimensional version of \eqref{1.1}
in the half-space $x=x_1>0$. By the boundary $x=0$ to be characteristic for the relaxation system \eqref{1.1}, we mean that the coefficient matrix $A_1(U)$ is not invertible. It is worth to mention that the one-dimensional relaxation problem possesses some key features of usual two-dimensional hyperbolic equations, particularly for the boundary conditions. Such features do not allow to simply decouple the characteristic IBVP into a non-characteristic problem and  ordinary differential equations.

We try to study the above characteristic IBVPs by following the framework in \cite{Y2}. Since the invertibility of $A_1(U)$ plays the role of the starting point in \cite{Y2}, we have to overcome various technical difficulties at the outset and throughout the whole procedure. In particular, the crucial GKC needs to be re-defined. 
The main contributions of this work can be stated as follows.
We propose a modified Generalized Kreiss Condition for the characteristic boundaries of type I.
Under this modified condition, we derive the reduced boundary condition for the corresponding equilibrium system and prove that the resultant IBVP is well-posed. For the linear problem with constant coefficients, we adapt the decomposition method in \cite{GKO} to show the validity of the reduced boundary condition. The method is a combination of the energy estimate and the Laplace transform, which are based on the structural stability condition and the (modified) GKC, respectively.
For nonlinear equations we show the existence of boundary-layers under the modified GKC.

The above results are based on the following assumptions. The system \eqref{1.1} is required to satisfy the structural stability condition in \cite{Y1}.
Moreover, we assume that the symmetrizer $A_0(U)$ in the structural stability condition and the source term $Q(U)$ satisfy 
\begin{equation}\label{1.2}
  A_0(U)Q_U(U)=Q_U^*(U)A_0(U)\quad \text{for}\ U\in \mathcal{E}.
\end{equation}
Here $Q_U(U)$ is the Jacobian matrix of the source term $Q(U)$, the superscript $*$ denotes the transpose of matrices or vectors, and $\mathcal{E}$ stands for the equilibrium manifold $\mathcal{E} = \{U \in G: Q(U) = 0\}$. 
The stability condition and the assumption \eqref{1.2} are quite reasonable and cover many important relaxation models \cite{Y5}. 
In addition, we follow \cite{MO} to assume that the boundary condition does not involve the characteristic modes corresponding to the zero eigenvalue and the boundaries are uniformly characteristic for nonlinear problems. These additional assumptions were shown in \cite{MO} to be crucial for the well-posedness of general hyperbolic IBVPs with characteristic boundaries.

At this point, let us mention that except for \cite{Y2}, the other works about the IBVPs of the relaxation systems seem all for specific models. For example, the one-dimensional linear Jin-Xin model was considered in \cite{XX}. By using the Laplace transform and a matched asymptotic analysis, the authors analyzed the relaxation limit and boundary-layer behaviors. 
In \cite{CH}, the authors studied a specific IBVP for the one-dimensional Kerr-Debye model in nonlinear optics. They justified the zero relaxation limit by exploiting the energy method and the entropy structure of the model.
Recently, a nonlinear discrete-velocity model for traffic flows was investigated in \cite{BK1} and the reduced boundary condition was obtained by solving a boundary Riemann problem. 
The interested reader is referred to \cite{WX,XX2,Xu1,Xu2,ZY} for further works in this direction. 

The rest of this paper is organized as follows. The next section is devoted to linear problems and 
contains four subsections. All assumptions are given in the short Subsection 2.1. 
In Subsection 2.2, we investigate the possible non-existence of the zero relaxation limit and propose a modified GKC for the characteristic boundaries of type I; The reduced boundary condition is derived in Subsection 2.3; Subsection 2.4 is devoted to proving the validity of the reduced boundary condition. In Section 3, we show the existence of boundary-layers for one-dimensional nonlinear problems. Some details of Section 2 are given in Appendix.

For the convenience of the reader, we end this introduction with the aforementioned structural stability condition \cite{Y1} for \eqref{1.1}:
\begin{enumerate}
  \item[(i)] There is an invertible $n\times n$-matrix $P(U)$ and an invertible $r\times r$-matrix $S(U)$ $(0 < r \leq n)$, defined on the equilibrium manifold $\mathcal{E} = \{U \in G:
Q(U) = 0\}$, such that
$$
P(U)Q_U(U)=\left({\begin{array}{*{20}c}
  \vspace{1.5mm}0 & 0 \\
  0 & S(U)
  \end{array}}\right)P(U)\quad \text{for}\  U\in \mathcal{E};
$$
  \item[(ii)] As a hyperbolic system, \eqref{1.1} is symmetrizable, that is, there is a positive
definite symmetric matrix $A_0(U)$ such that
$$
A_0(U)A_1(U) = A_1^*(U)A_0(U) \quad \text{for}\ U\in G;
$$
\item[(iii)] The hyperbolic part and the source term are coupled in the following
sense
$$
A_0(U)Q_U(U)+ Q^*_U(U)A_0(U) \leq -P^*(U)\left({\begin{array}{*{20}c}
  \vspace{1.5mm}0 & 0 \\
  0 & I_r
  \end{array}}\right)
P(U) \quad \text{for}\ U \in \mathcal{E}.
$$
\end{enumerate}
Here and below $I_k$ denotes the unit matrix of order $k$.

\section{Linear Problems}\label{section2}

\subsection{Assumptions}\label{subsection2.1}

We start with one-dimensional linear systems with constant coefficients. Under the structural stability condition, we may as well assume that the linear system has the following form
\begin{eqnarray}\label{2.1}
{\begin{array}{*{20}l}
\vspace{2mm}A_0U_t+ A_1U_x=\dfrac{QU}{\epsilon},\qquad x>0,\ t>0.
  \end{array}}
\end{eqnarray}
Here $A_0$ is a symmetric positive definite matrix, $A_1$ is a symmetric matrix, and $Q=\text{diag}(0,S)$ with $S$ an $r \times r$ ($0\leq r \leq n$) stable matrix.  Moreover, $S$ is a symmetric negative definite matrix under the assumption \eqref{1.2}.
By Theorem 2.2 in \cite{Y1}, the structural stability condition implies that $A_0$ is a block-diagonal matrix of the form
$$
A_0=\left({\begin{array}{*{20}c}
  \vspace{1.5mm}A_{01} &   \\
                       & A_{02}
  \end{array}}\right) 
$$
with the same partition as that of $Q$. Correspondingly, we often write 
$$U=\left({\begin{array}{*{20}c}
  \vspace{1.5mm}u \\
  v
  \end{array}}\right)
,\qquad 
  A_1=\left({\begin{array}{*{20}c}
  \vspace{1.5mm}A_{11} & A_{12} \\
  A_{12}^* & A_{22}
  \end{array}}\right).
$$

Besides the structural stability condition and \eqref{1.2}, the next assumption for \eqref{2.1} is 
\begin{assumption}\label{asp2.1}
The boundary $x=0$ is characteristic for the relaxation system \eqref{2.1}
but is non-characteristic for its equilibrium system
$$
A_{01}u_t+A_{11}u_x=0.
$$  
Namely, $A_{11}$ is invertible but $A_1$ has zero eigenvalues.
\end{assumption}
\noindent With this assumption, we introduce the following three numbers 
\begin{align*}
n_{1}^+=& \text{ the number of positive eigenvalues of} \ A_{11},\\[2mm]
n^+=& \text{ the number of positive eigenvalues of} \ A_1,\\[2mm]
n^o=& \text{ the number of zero eigenvalues of} \ A_1.
\end{align*}

According to the classical theory \cite{BS,GKO} for IBVPs of first-order hyperbolic systems, $n^+$ boundary conditions 
\begin{equation}\label{2.2} 
  BU(0,t)\equiv (B_u,B_v)U(0,t)=b(t)
\end{equation}
should be given at the boundary $x=0$ for \eqref{2.1} and the $n^+\times n$ boundary matrix $B$ should satisfy 
\begin{equation}\label{2.3}
  \det\{BR_A^U\}\neq 0.
\end{equation}
Here $R_A^U$ is an $n\times n^+$ matrix whose columns are the eigenvectors of $A_0^{-1}A_1$ associated with the positive eigenvalues.  
Our last assumption reads as  
\begin{equation}\label{2.4}
  BR_A^0=0
\end{equation}
for each eigenvector $R_A^0$ of $A_0^{-1}A_1$ associated with the zero eigenvalue. The relation \eqref{2.4} means that no characteristic mode corresponding to the zero eigenvalue is involved in the boundary condition above. The necessity of \eqref{2.4} was illustrated in \cite{MO} for the characteristic IBVPs.

In summary, our starting assumptions for the linear relaxation system \eqref{2.1} are the structural stability condition, Assumption \ref{asp2.1}, and those in \eqref{1.2} and \eqref{2.4}.

\subsection{Modified Generalized Kreiss Condition}\label{subsection2.2}
In this subsection, we adapt the standard argument in \cite{Hi,Kr,Y2} to our characteristic IBVPs 
and show that the homogeneous problem
\begin{eqnarray}\label{2.5}
\left\{{\begin{array}{*{20}l}
\vspace{1.5mm}A_0U_t+A_1U_x=\dfrac{QU}{\epsilon},\qquad x>0,\ t>0\\[3mm]
BU(0,t)=0 
  \end{array}}\right.
\end{eqnarray}
with bounded initial data may have exponentially increasing solution for $t>0$ as $\epsilon$ goes to zero. Based on this argument, we propose a modified GKC for the characteristic boundaries of type I.

Following Lemma 3.1 in \cite{Y2}, we consider the following problem 
\begin{eqnarray}\label{2.6}
\left\{{\begin{array}{*{20}l}
\xi A_0\hat{U}+A_1\hat{U}_x=\eta Q\hat{U},\qquad x>0,\\[3mm]
B\hat{U}(0)=0,
  \end{array}}\right.
\end{eqnarray}
where $\eta\geq 0$ and $\xi$ is a complex number with $Re\xi>0$. With $\eta=0$, this reduces to that for IBVPs of conventional hyperbolic equations \cite{Hi}. If \eqref{2.6} 
has a bounded solution $\hat{U}(x)$ for a certain $\eta>0$, then 
$$
U^\epsilon=\exp\bigg(\frac{\xi t}{\eta \epsilon}\bigg)\hat{U}(\frac{x}{\eta \epsilon})\qquad \ \ \ \ 
$$
is the solution to the problem \eqref{2.5} with a bounded initial value. Since $Re\xi>0$ and $\eta>0$, this solution $U^\epsilon$ exponentially increases for $t>0$ as $\epsilon$ goes to zero. 

In order to avoid such exponentially increasing solutions, our first task is to find conditions under what the problem \eqref{2.6} has bounded solutions. 
Before proceeding, we notice that for the characteristic boundary where $A_1$ is not invertible, the differential equation in \eqref{2.6} can not be rewritten as 
$$
\hat{U}_x=A_1^{-1}(\eta Q-\xi A_0)\hat{U}. 
$$
This is quite different from the non-characteristic case. To deal with this difficulty, we formulate the following lemma.
\begin{lemma}\label{lemma2.1}
Under the assumptions in Subsection \ref{subsection2.1}, there exists an $(n-n^o)\times (n-n^o)$-matrix $M(\xi,\eta)$ and an $n^o\times (n-n^o)$-matrix $E(\xi,\eta)$ such that the equation in \eqref{2.6} can be rewritten as
\begin{eqnarray*} 
\left\{{\begin{array}{*{20}l}
\vspace{1.5mm}\hat{V}^I_x=M(\xi,\eta)\hat{V}^I,\\[3mm]
\hat{V}^{II}=E(\xi,\eta)\hat{V}^I.
  \end{array}}\right.
\end{eqnarray*}
Here $\hat{V}=\Phi\hat{U}$ with $\Phi$ an orthonormal matrix given below, $\hat{V}^I$ represents the first $(n-n^o)$ components of $\hat{V}$ and $\hat{V}^{II}$ is the rest components.	
\end{lemma}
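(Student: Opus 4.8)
The plan is to use the symmetry of $A_1$ to diagonalize it, thereby isolating the $n^o$ degenerate directions along which the $x$-derivative disappears. Since $A_1$ is symmetric, there exists an orthonormal matrix $\Phi$ (i.e. $\Phi\Phi^*=I$) with $\Phi A_1\Phi^*=\diag(\Lambda,0)$, where $\Lambda$ is an invertible diagonal block of order $n-n^o$ collecting the nonzero eigenvalues of $A_1$ and the trailing zero block has order $n^o$. This $\Phi$ is the matrix ``given below'' in the statement. Setting $\hat V=\Phi\hat U$ and multiplying the differential equation in \eqref{2.6} on the left by $\Phi$, I would obtain
\[
\xi\,\tilde A_0\hat V+\diag(\Lambda,0)\,\hat V_x=\eta\,\tilde Q\hat V,
\]
with $\tilde A_0=\Phi A_0\Phi^*$ still symmetric positive definite and $\tilde Q=\Phi Q\Phi^*$ still symmetric negative semidefinite (recall $Q=\diag(0,S)$ with $S$ symmetric negative definite under \eqref{1.2}, and orthogonal similarity preserves symmetry and definiteness).

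Next I would partition $\hat V=(\hat V^I,\hat V^{II})$ according to the block structure of $\diag(\Lambda,0)$ and write $\tilde A_0$, $\tilde Q$ in the corresponding $2\times2$ block forms $\tilde A_0^{ij}$, $\tilde Q^{ij}$. Because the lower block of $\diag(\Lambda,0)$ vanishes, the second row of the system carries no $x$-derivative and reduces to the purely algebraic relation
\[
(\xi\tilde A_0^{22}-\eta\tilde Q^{22})\hat V^{II}=(\eta\tilde Q^{21}-\xi\tilde A_0^{21})\hat V^I.
\]
Assuming for the moment that $\xi\tilde A_0^{22}-\eta\tilde Q^{22}$ is invertible, I would solve for $\hat V^{II}$ and read off $E(\xi,\eta)=(\xi\tilde A_0^{22}-\eta\tilde Q^{22})^{-1}(\eta\tilde Q^{21}-\xi\tilde A_0^{21})$. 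Substituting $\hat V^{II}=E(\xi,\eta)\hat V^I$ into the first row and inverting the nonsingular diagonal block $\Lambda$ then yields $\hat V^I_x=M(\xi,\eta)\hat V^I$ with $M(\xi,\eta)=\Lambda^{-1}\big[(\eta\tilde Q^{11}-\xi\tilde A_0^{11})+(\eta\tilde Q^{12}-\xi\tilde A_0^{12})E(\xi,\eta)\big]$, which is exactly the claimed decomposition; the asserted dimensions of $M$ and $E$ follow directly from the block sizes.

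The crux, and the only place where the hypotheses $\text{Re}\,\xi>0$ and $\eta\ge0$ are genuinely used, is justifying that $\xi\tilde A_0^{22}-\eta\tilde Q^{22}$ is invertible so that $E$ is well defined. Here I would argue by contradiction: if $(\xi\tilde A_0^{22}-\eta\tilde Q^{22})w=0$ for some nonzero $w$, then taking the Hermitian inner product with $w$ gives $\xi\langle w,\tilde A_0^{22}w\rangle=\eta\langle w,\tilde Q^{22}w\rangle$. Since $\tilde A_0^{22}$ is a principal submatrix of a symmetric positive definite matrix, $\langle w,\tilde A_0^{22}w\rangle$ is real and strictly positive; since $\tilde Q^{22}$ is a principal submatrix of a symmetric negative semidefinite matrix, $\langle w,\tilde Q^{22}w\rangle$ is real and nonpositive. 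Taking real parts yields $\text{Re}\,\xi\,\langle w,\tilde A_0^{22}w\rangle=\eta\langle w,\tilde Q^{22}w\rangle$, whose left-hand side is strictly positive while the right-hand side is nonpositive, a contradiction. Hence the matrix is nonsingular throughout $\{\text{Re}\,\xi>0,\ \eta\ge0\}$, and $E,M$ depend smoothly (indeed analytically) on $(\xi,\eta)$ there. I expect this invertibility step to be the main, if brief, obstacle, precisely because it is where the structural stability condition and assumption \eqref{1.2} enter to guarantee that the degenerate block can be algebraically eliminated.
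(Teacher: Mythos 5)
Your proof is correct, and the invertibility argument for $\xi\tilde A_0^{22}-\eta\tilde Q^{22}$ (real symmetric blocks, quadratic form, take real parts) is sound; it is essentially the ``very short and simple proof'' that the paper explicitly mentions and then deliberately declines to give. The route is genuinely different from the paper's. You diagonalize $A_1$ itself by its spectral decomposition, so your $\Phi$ mixes all $n$ components and you never need the invertibility of $A_{11}$ from Assumption \ref{asp2.1}; in that sense your argument is more elementary and slightly more general. The paper instead keeps the $(u,v)$-block structure intact: it solves the first block row for $\hat u_x$ using $A_{11}^{-1}$, forms the Schur complement $A_{22}-A_{12}^*A_{11}^{-1}A_{12}$, and diagonalizes only that $r\times r$ block with $\tilde P=(P_2,P_0)$, so that $\Phi=\mathrm{diag}(I_{n-r},\tilde P^*)$, $\hat V^I=(\hat u,\hat w_2)$ and $\hat V^{II}=\hat w_0$. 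What that tedium buys is the explicit dependence of $M$ and $E$ on $\xi$ and $\eta$ through the blocks $N_1,\dots,N_4$ and $S_{ij}=P_i^*(\eta S-\xi A_{02})P_j$, which is exactly what is expanded in powers of $1/\eta$ to prove Lemma \ref{lemma2.4} and what enters the formulation of the modified GKC via $N_5,N_6$. Your $\Phi$, $M$, $E$ are different matrices from the paper's, so while you do prove the existential statement of Lemma \ref{lemma2.1}, your construction would not feed directly into the subsequent analysis; if you wanted to continue past this lemma you would need either the paper's block-wise elimination or a separate argument relating your spectral decomposition of $A_1$ to the equilibrium variables $(u,v)$.
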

 
This lemma allows a very short and simple proof. But here we present a different and tedious proof, in order to study the limit as $\eta$ goes to infinity in the next subsection.

\begin{proof} 
First of all, we recall $Q=\text{diag}(0,S)$ and 
$A_1=\left({\begin{array}{*{20}c}
  A_{11} & A_{12} \\
  A_{12}^* & A_{22}
  \end{array}}\right)$. 
Set $\hat{U}=\left({\begin{array}{*{20}c}
   \hat{u}\\
   \hat{v}
  \end{array}}\right)$ and rewrite the equation in \eqref{2.6} as
\begin{align}
A_{11}\hat{u}_x+A_{12}\hat{v}_x=&-\xi A_{01}\hat{u},\label{2.7}\\[2mm]
A_{12}^*\hat{u}_x+A_{22}\hat{v}_x=&(\eta S-\xi A_{02}) \hat{v}.\label{2.8}
\end{align}
Since $A_{11}$ is invertible, from \eqref{2.7} it follows that 
\begin{equation}\label{2.9}
  \hat{u}_x=-A_{11}^{-1}A_{12}\hat{v}_x-\xi A_{11}^{-1}A_{01}\hat{u}.
\end{equation}
Substituting this relation into \eqref{2.8}, we get
\begin{eqnarray}\label{2.10}
(A_{22}-A_{12}^*A_{11}^{-1}A_{12})\hat{v}_x=(\eta S-\xi A_{02})\hat{v}+\xi A_{12}^*A_{11}^{-1}A_{01}\hat{u}.
\end{eqnarray}

On the the hand, from the congruent transformation
\begin{align}\label{2.11}
\left({\begin{array}{*{20}c}
  \vspace{2mm}I & 0 \\
  -A_{12}^*A_{11}^{-1} & I
  \end{array}}\right)
    \left({\begin{array}{*{20}c}
  \vspace{2mm}A_{11} & A_{12} \\
  A_{12}^* & A_{22}
  \end{array}}\right)
  \left({\begin{array}{*{20}c}
  \vspace{2mm}I & -A_{11}^{-1}A_{12} \\
  0 & I
  \end{array}}\right) 
  =\left({\begin{array}{*{20}c}
  \vspace{2mm}A_{11} & 0 \\
  0 & A_{22}-A_{12}^*A_{11}^{-1}A_{12}
  \end{array}}\right)
\end{align}
for $A_1$, we see that the $r\times r$-matrix $(A_{22}-A_{12}^*A_{11}^{-1}A_{12})$ is symmetric with $n^o$ zero eigenvalues. Then there exists an $r\times r$ orthonormal matrix $\tilde{P}$ satisfying 
\begin{equation}\label{2.12}
  \tilde{P}^*(A_{22}-A_{12}^*A_{11}^{-1}A_{12})\tilde{P}
  =\left({\begin{array}{*{20}c}
  \vspace{1.5mm} \Lambda_2 & 0\\
  0 & 0
  \end{array}}\right).
\end{equation}
Here $\Lambda_2$ is a real $(r-n^o)\times (r-n^o)$ diagonal matrix whose diagonal elements are nonzero eigenvalues of $(A_{22}-A_{12}^*A_{11}^{-1}A_{12})$. 
With the partition of $\text{diag}(\Lambda_2, 0)$, we write $\tilde{P}=(P_2,P_0)$ with $P_2$ an $r\times (r-n^o)$-matrix and $P_0$ an $r\times n^o$-matrix. Then the last equation can be expressed as
\begin{align}
P_2^*(A_{22}-A_{12}^*A_{11}^{-1}A_{12})P_2=&\Lambda_2,\label{2.13}\\[2mm]
(A_{22}-A_{12}^*A_{11}^{-1}A_{12})P_0=&0.\label{2.14}
\end{align}
Moreover, since $\tilde{P}=(P_2,P_0)$ is an orthonormal matrix, we have the decomposition 
\begin{align} 
  \hat{v}=P_2P_2^*\hat{v}+P_0P_0^*\hat{v}:=P_2\hat{w}_2+P_0\hat{w}_0.\label{2.15}
\end{align}

With these preparations, the equation \eqref{2.10} can be rewritten as
\begin{align*} 
  (A_{22}-A_{12}^*A_{11}^{-1}A_{12})P_2\hat{w}_{2x}=(\eta S-\xi A_{02})(P_2\hat{w}_{2}+P_0\hat{w}_{0})+
  \xi A_{12}^*A_{11}^{-1}A_{01}\hat{u}.
\end{align*}
Multiplying this equation with $P_0^*$ and $P_2^*$ from the left and using \eqref{2.13} with \eqref{2.14}, we obtain
\begin{equation*}
  0= P_0^*(\eta S-\xi A_{02})(P_2\hat{w}_{2}+P_0\hat{w}_{0})+
  \xi P_0^*A_{12}^*A_{11}^{-1}A_{01}\hat{u} 
\end{equation*}
and 
\begin{equation*}
  \Lambda_2w_{2x}= P_2^*(\eta S-\xi A_{02})(P_2\hat{w}_{2}+P_0\hat{w}_{0})+
  \xi P_2^*A_{12}^*A_{11}^{-1}A_{01}\hat{u}.
\end{equation*}
Set 
\begin{equation}\label{2.16}
	S_{ij}:=P_i^*(\eta S-\xi A_{02})P_j,\qquad i,j=0,2. 
\end{equation}
The last two equations can be rewritten as 
\begin{equation}\label{2.17}
  S_{00}\hat{w}_0+S_{02} \hat{w}_2+\xi P_0^*A_{12}^*A_{11}^{-1}A_{01}\hat{u}=0
\end{equation}
and
\begin{align} 
  \Lambda_2 \hat{w}_{2x} =& S_{22}\hat{w}_{2}+ S_{20}\hat{w}_{0}+\xi P_2^*
A_{12}^*A_{11}^{-1}A_{01}\hat{u}.\label{2.18}
\end{align}

It is shown in Appendix \ref{AppendA} that the matrix $S_{00}$ is invertible. Then from \eqref{2.17} we can express $\hat{w}_0$ in terms of $\hat{u}$ and $\hat{w}_2$ as 
\begin{equation}\label{2.19} 
  \hat{w}_0=-S_{00}^{-1}(S_{02} \hat{w}_2+\xi P_0^*A_{12}^*A_{11}^{-1}A_{01}\hat{u}).
\end{equation}
Substituting this expression into \eqref{2.18}, we obtain
\begin{align}
        \Lambda_2 \hat{w}_{2x} 
       = &(S_{22}-S_{20}S_{00}^{-1}S_{02})\hat{w}_2 \nonumber \\[2mm]
       & +\xi P_2^* A_{12}^*A_{11}^{-1}A_{01}\hat{u} - \xi S_{20} S_{00}^{-1}P_0^*A_{12}^*A_{11}^{-1}A_{01}\hat{u}.\label{2.20}
\end{align}
Furthermore, with the decomposition in \eqref{2.15}, the equation \eqref{2.9} can be expressed as 
\begin{equation*} 
  \hat{u}_x+A_{11}^{-1}A_{12}(P_2\hat{w}_{2x}+P_0\hat{w}_{0x})=-\xi A_{11}^{-1}A_{01} \hat{u}.
\end{equation*}
With \eqref{2.19}, this equation can be further rewritten as
\begin{align*}
  (I_{n-r}-\xi  A_{11}^{-1}A_{12}P_0 S_{00}^{-1} P_0^*A_{12}^*A_{11}^{-1}A_{01})\hat{u}_x&\\[2mm]
  +A_{11}^{-1}A_{12}(P_2-P_0S_{00}^{-1}S_{02})\hat{w}_{2x}&=-\xi A_{11}^{-1}A_{01} \hat{u}. 
\end{align*}
This and \eqref{2.20} can be written together as 
\begin{align}\label{2.21}
\left({\begin{array}{*{20}c}
  \vspace{1.5mm}I_{n-r}+N_1(\xi,\eta) & N_2(\xi,\eta)\\
  0 & I_{r-n^o}
  \end{array}}\right)
\left({\begin{array}{*{20}c}
  \vspace{1.5mm}\hat{u}\\
  \hat{w}_{2}
  \end{array}}\right)_x= 
\left({\begin{array}{*{20}c}
  \vspace{1.5mm}-\xi A_{11}^{-1}A_{01} & 0\\
  N_3(\xi,\eta) & N_4(\xi,\eta)
  \end{array}}\right)\left({\begin{array}{*{20}c}
  \vspace{1.5mm}\hat{u}\\
  \hat{w}_{2}
  \end{array}}\right),
\end{align}
where 
\begin{eqnarray}\label{2.22} 
\left\{{\begin{array}{*{20}l}
N_1(\xi,\eta) = -\xi (A_{11}^{-1}A_{12}P_0) S_{00}^{-1} (P_0^*A_{12}^*A_{11}^{-1})A_{01},\\[3mm]
N_2(\xi,\eta) = A_{11}^{-1}A_{12}(P_2-P_0S_{00}^{-1}S_{02}),\\[3mm]
N_3(\xi,\eta) = \xi\Lambda_2^{-1}(P_2^*  - S_{20}S_{00}^{-1}P_0^*)A_{12}^*A_{11}^{-1}A_{01} ,\\[3mm]
N_4(\xi,\eta) = \Lambda_2^{-1}[S_{22}-S_{20}S_{00}^{-1}S_{02}].
  \end{array}}\right.
\end{eqnarray}
The matrix $I_{n-r}+N_1(\xi,\eta)$ is proven to be invertible in Appendix \ref{AppendA}.

Now we define
\begin{equation}\label{2.23}
  M(\xi,\eta)=\left({\begin{array}{*{20}c}
  \vspace{1.5mm}I_{n-r}+N_1(\xi,\eta) & N_2(\xi,\eta)\\
  0 & I_{r-n^o}
  \end{array}}\right)^{-1}\left({\begin{array}{*{20}c}
  \vspace{1.5mm}-\xi A_{11}^{-1}A_{01} & 0\\
  N_3(\xi,\eta) & N_4(\xi,\eta)
  \end{array}}\right),
\end{equation}
which is an $(n-n^o)\times (n-n^o)$-matrix, and take
$$
\Phi=\left({\begin{array}{*{20}c}
  \vspace{1.5mm}I_{n-r} & 0 \\
  0  & \tilde{P}^*
  \end{array}}\right)=\left({\begin{array}{*{20}c}
  \vspace{1.5mm}I_{n-r} & 0 \\
  \vspace{1.5mm}0  & P_2^* \\
  0  & P_0^*
  \end{array}}\right).
$$
Thus $\hat{V}^I$ and $\hat{V}^{II}$ in the lemma can be expressed as 
\begin{eqnarray}\label{2.24}
	\hat{V}^I=\left({\begin{array}{*{20}c}
  \vspace{1.5mm}\hat{u}\\
               	P_2^*\hat{v}
  \end{array}}\right)=\left({\begin{array}{*{20}c}
  \vspace{1.5mm}\hat{u}\\
                \hat{w_2}
  \end{array}}\right),\qquad \quad\hat{V}^{II}=P_0^*\hat{v}=\hat{w}_0, 
\end{eqnarray}
and the equation \eqref{2.21} becomes 
$$
\hat{V}^{I}_x=M(\xi,\eta)\hat{V}^I.
$$
Finally, by defining 
\begin{equation}\label{2.25}
	E(\xi,\eta)\equiv \left(E_1(\xi,\eta), \ E_2(\xi,\eta)\right):=\left(-\xi S_{00}^{-1} P_0^*A_{12}^*A_{11}^{-1}A_{01},\ \ -S_{00}^{-1}S_{02} \right),
\end{equation}
the equation \eqref{2.19} can be written as  
$$
\hat{V}^{II}=E(\xi,\eta)\hat{V}^I.
$$ 
This completes the proof.
\end{proof}

For the matrix $M(\xi,\eta)$ defined in \eqref{2.23}, we have the following important fact.

\begin{lemma}\label{lemma2.2}
For any $\eta\geq 0$ and $\xi$ with $Re\xi>0$, the matrix $M(\xi,\eta)$ has no purely imaginary eigenvalue. 
\end{lemma}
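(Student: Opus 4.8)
The plan is to sidestep the explicit block form of $M(\xi,\eta)$ in \eqref{2.23} and instead exploit the equivalence established in Lemma \ref{lemma2.1}, which transfers any eigenmode of $M$ back to a solution of the differential equation in \eqref{2.6}. Suppose, for contradiction, that $M(\xi,\eta)$ has a purely imaginary eigenvalue $i\omega$ with $\omega\in\R$ and a nonzero eigenvector $\hat{V}^I_0$. Then $\hat{V}^I(x)=e^{i\omega x}\hat{V}^I_0$ solves $\hat{V}^I_x=M(\xi,\eta)\hat{V}^I$; setting $\hat{V}^{II}(x)=E(\xi,\eta)\hat{V}^I(x)$ and $\hat{U}(x)=\Phi^*\hat{V}(x)$, Lemma \ref{lemma2.1} guarantees that the reconstructed $\hat{U}(x)=e^{i\omega x}\hat{U}_0$ satisfies the equation $\xi A_0\hat{U}+A_1\hat{U}_x=\eta Q\hat{U}$. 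Since $\Phi$ is orthonormal and $\hat{V}^I_0$ is the leading block of $\hat{V}_0=(\hat{V}^I_0,E(\xi,\eta)\hat{V}^I_0)^*$, the vector $\hat{U}_0=\Phi^*\hat{V}_0$ is nonzero. Inserting the exponential form then yields the algebraic identity
\begin{equation*}
  (\xi A_0+i\omega A_1-\eta Q)\hat{U}_0=0.
\end{equation*}

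The decisive step is a short energy computation on this identity. Multiplying from the left by the conjugate transpose of $\hat{U}_0$ produces a scalar relation of the form $\xi\alpha+i\omega\beta-\eta\gamma=0$, where $\alpha,\beta,\gamma$ denote the three Hermitian quadratic forms generated by $A_0$, $A_1$ and $Q$, respectively. Because $A_0$, $A_1$ and $Q=\diag(0,S)$ are all real and symmetric, each of $\alpha,\beta,\gamma$ is real. Taking the real part therefore annihilates the middle term (recall $\omega$ is real) and leaves
\begin{equation*}
  (Re\,\xi)\,\alpha-\eta\,\gamma=0.
\end{equation*}
The structural stability condition makes $A_0$ positive definite, so $\alpha>0$ for $\hat{U}_0\neq 0$; assumption \eqref{1.2} makes $S$ negative definite, so $\gamma\le 0$. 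Together with $Re\,\xi>0$ and $\eta\ge 0$, this forces the left-hand side to be strictly positive, contradicting the equality. Hence $M(\xi,\eta)$ admits no purely imaginary eigenvalue.

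I expect the only delicate point to be the bookkeeping in the first step: confirming cleanly that an eigenvector of $M$ genuinely lifts to a nonzero solution of the singular (differential–algebraic) equation in \eqref{2.6}, and that the reconstructed $\hat{U}_0$ does not vanish. This is exactly where the orthonormality of $\Phi$ and the slaving relation $\hat{V}^{II}=E(\xi,\eta)\hat{V}^I$ from Lemma \ref{lemma2.1} are needed. Once the problem is pulled back to the constant-coefficient identity $(\xi A_0+i\omega A_1-\eta Q)\hat{U}_0=0$, the remainder is the quadratic-form argument above, which uses nothing beyond the symmetry and definiteness already encoded in the structural stability condition and \eqref{1.2}; in particular, the explicit expressions for $N_1,\dots,N_4$ in \eqref{2.22} never enter the argument.
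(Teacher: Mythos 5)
Your proof is correct, but it takes a genuinely different route from the paper's. The paper (Appendix \ref{AppendB}) works entirely at the level of determinants: starting from $\det\{\lambda I-M(\xi,\eta)\}=0$ and the explicit block formula \eqref{2.23}, it performs a long chain of row/column operations on bordered determinants to show this is equivalent to $\det\{\xi A_0+\lambda A_1-\eta Q\}=0$, and then invokes Proposition \ref{lemmaA.1} (a matrix $A+iB$ with $A$ symmetric positive definite and $B$ symmetric is invertible) to rule out purely imaginary $\lambda$. You reach the same pencil $\xi A_0+i\omega A_1-\eta Q$ by a softer argument: since every step in the proof of Lemma \ref{lemma2.1} is reversible ($A_{11}$, $S_{00}$ and $I_{n-r}+N_1$ are all invertible, and the slaving relation \eqref{2.19} may be differentiated), an eigenvector of $M(\xi,\eta)$ for the eigenvalue $i\omega$ lifts via $\hat V^{II}=E\hat V^I$ and $\hat U=\Phi^*\hat V$ to a nonzero null vector $\hat U_0$ of the pencil, and the quadratic-form identity $\mathrm{Re}\,\xi\cdot\hat U_0^*A_0\hat U_0-\eta\,\hat U_0^*Q\hat U_0=0$ then gives the contradiction directly (this last step is in effect a kernel-level restatement of Proposition \ref{lemmaA.1} applied to $(\mathrm{Re}\,\xi\,A_0-\eta Q)+i(\mathrm{Im}\,\xi\,A_0+\omega A_1)$). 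What your approach buys is brevity and transparency: the explicit expressions \eqref{2.22} for $N_1,\dots,N_4$ never enter, and the delicate point you flag --- that the reduction in Lemma \ref{lemma2.1} is an equivalence, not just an implication --- does hold and is the only thing that needs checking. What the paper's computation buys is an explicit identification of the characteristic determinant of $M(\xi,\eta)$ with that of the pencil $\xi A_0+\lambda A_1-\eta Q$ (up to nonvanishing factors), which is a slightly stronger piece of information than the eigenvalue exclusion alone; for the purposes of Lemma \ref{lemma2.2} itself, however, your argument is complete and preferable for its economy.
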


We postpone the proof of this lemma in Appendix \ref{AppendB}. With this lemma, we can prove 

\begin{lemma}\label{lemma2.3}
For any $\eta\geq 0$ and $\xi$ with $Re\xi>0$, the matrix $M(\xi,\eta)$ has precisely $n^+$ stable eigenvalues and thereby $(n-n^o-n^+)$ unstable eigenvalues.
\end{lemma}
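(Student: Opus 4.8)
The plan is to prove Lemma~\ref{lemma2.3} by a continuity (homotopy) argument in the parameter $\eta$, anchored at $\eta=0$ where the matrix $M(\xi,0)$ reduces to the well-understood non-characteristic case of \cite{Y2}. The key structural input is Lemma~\ref{lemma2.2}, which guarantees that $M(\xi,\eta)$ never has a purely imaginary eigenvalue for any $\eta\ge 0$ and any $\xi$ with $\mathrm{Re}\,\xi>0$. Since the eigenvalues of $M(\xi,\eta)$ depend continuously on $\eta$ (the entries $N_1,\dots,N_4$ in \eqref{2.22} are continuous in $\eta$, and $I_{n-r}+N_1$ stays invertible), no eigenvalue can cross the imaginary axis as $\eta$ varies over $[0,\infty)$. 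Therefore the number of stable eigenvalues (those with negative real part) is constant in $\eta$, and it suffices to count them at the single convenient value $\eta=0$.

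First I would examine $M(\xi,0)$. At $\eta=0$ the equation in \eqref{2.6} becomes $\xi A_0\hat U + A_1\hat U_x = 0$, i.e.\ the ODE system associated with a genuinely hyperbolic (equilibrium-free) problem, and the reduction of Lemma~\ref{lemma2.1} realizes $M(\xi,0)$ essentially as the restriction of $-\xi (A_1')^{-1}A_0'$ to the non-degenerate directions, where $A_1'$ denotes $A_1$ compressed to its $(n-n^o)$-dimensional non-kernel part via the transformation $\Phi$ and the splitting \eqref{2.11}–\eqref{2.12}. The stable eigenvalues of this matrix correspond, by the standard spectral correspondence for $\mathrm{Re}\,\xi>0$, to the positive eigenvalues of the reduced symmetric pencil $(A_0',A_1')$. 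Counting these is exactly the count of positive eigenvalues of $A_1$ on its non-kernel invariant subspace, which is $n^+$ by the definitions of $n^+$ and $n^o$ in Subsection~\ref{subsection2.1}. Hence $M(\xi,0)$ has precisely $n^+$ stable eigenvalues, and the remaining $(n-n^o)-n^+$ are unstable.

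The main obstacle I anticipate is the counting step at $\eta=0$: one must show carefully that the congruence in \eqref{2.11} together with the orthogonal reduction \eqref{2.12} sends the $n^+$ positive eigenvalues of $A_1$ and the $(n-n^o-n^+)$ negative ones cleanly into the $(n-n^o)$-dimensional block governed by $M(\xi,0)$, with none of the positive/negative modes being absorbed into the $n^o$-dimensional kernel block $\hat V^{II}$. This requires verifying that the Schur-complement reduction producing $\Lambda_2$ preserves the inertia of $A_1$ off its kernel—which is precisely Sylvester's law of inertia applied to the congruence \eqref{2.11}—and that the eigenvectors associated with the zero eigenvalue of $A_1$ are exactly those eliminated in passing to $\hat V^I$. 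Assumption~\ref{asp2.1} ($A_{11}$ invertible) is what makes this Schur complement well-defined, so I would invoke it explicitly here.

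Finally, I would assemble the pieces: by Lemma~\ref{lemma2.2} the stable-eigenvalue count is locally constant in $\eta$ on $[0,\infty)$, hence globally constant by connectedness of $[0,\infty)$; by the computation above that constant equals $n^+$; therefore for every $\eta\ge 0$ the matrix $M(\xi,\eta)$ has exactly $n^+$ stable and $(n-n^o-n^+)$ unstable eigenvalues, with no purely imaginary eigenvalues in between. A minor technical point worth checking is that the continuity argument is uniform enough that eigenvalues do not escape to infinity as $\eta\to\infty$; but since the claim is stated for each fixed finite $\eta\ge 0$, ordinary continuity on any compact subinterval suffices, and no behavior at $\eta=\infty$ is needed for this statement.
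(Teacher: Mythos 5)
Your proposal is correct and follows essentially the same route as the paper: Lemma~\ref{lemma2.2} plus continuity of the eigenvalues in $\eta$ reduces the count to $\eta=0$, where it is obtained from the inertia of $A_1$ via Sylvester's law applied to the congruence \eqref{2.11}--\eqref{2.12}. The only place the paper is more explicit is the factorization \eqref{2.27}, which exhibits $M(\xi,0)$ as $\xi$ times a matrix similar to a product of a positive definite matrix with one congruent to $\text{diag}(-A_{11}^{-1},-\Lambda_2^{-1})$ --- slightly more involved than your ``restriction of $-\xi (A_1')^{-1}A_0'$'' heuristic, since the Schur-complement corrections $X_1$, $X_2$ enter, but it yields exactly the inertia count you describe.
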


\begin{proof}
By Lemma \ref{lemma2.2} and the continuity of eigenvalues with respect to the parameters, the numbers of stable and unstable eigenvalues of $M(\xi,\eta)$ are invariant for any $\eta\geq 0$ and $\xi$ with $Re\xi >0$. Thus it suffices to show that the matrix $M(\xi,0)$ has $n^+$ stable eigenvalues. 

When $\eta=0$, it follows from \eqref{2.16} that $S_{ij}=-\xi P_i^*A_{02}P_j$ for $i,j=0,2$. Thus we refer to \eqref{2.22} and have
\begin{align}
I_{n-r}+N_1(\xi,0)=&I_{n-r}+A_{11}^{-1}A_{12}P_0(P_0^*A_{02}P_0)^{-1}P_0^*A_{12}^*A_{11}^{-1}A_{01}\nonumber\\[2mm]
:= & X_1A_{01},\nonumber\\[2mm]
N_4(\xi,0)=&-\xi \Lambda_2^{-1}\big[P_2^*A_{02}P_2 - P_2^*A_{02}P_0(P_0^*A_{02}P_0)^{-1}P_0^*A_{02}P_2\big]\nonumber\\[2mm]
          :=&-\xi \Lambda_2^{-1}X_2 \label{2.26}
\end{align}
with $X_1$, $X_2$ two symmetric matrices. It is clear that $X_1$ is positive definite. On the other hand, it follows from the congruent transformation 
\begin{footnotesize}
\begin{align*}
&\left({\begin{array}{*{20}c}
  \vspace{1.5mm}I_{r-n^o} & -P_2^*A_{02}P_0(P_0^*A_{02}P_0)^{-1}\\
                0 & I_{n^o}
  \end{array}}\right)
\left({\begin{array}{*{20}c}
  \vspace{1.5mm}P_2^*A_{02}P_2 & P_2^*A_{02}P_0\\
                P_0^*A_{02}P_2 & P_0^*A_{02}P_0
  \end{array}}\right)
\left({\begin{array}{*{20}c}
  \vspace{1.5mm}I_{r-n^o} & 0\\
                -(P_0^*A_{02}P_0)^{-1}P_0^*A_{02}P_2 & I_{n^o}
  \end{array}}\right)\\[4mm]
=&\left({\begin{array}{*{20}c}
  \vspace{1.5mm}P_2^*A_{02}P_2 - P_2^*A_{02}P_0(P_0^*A_{02}P_0)^{-1}P_0^*A_{02}P_2 &  \\
                    & P_0^*A_{02}P_0
  \end{array}}\right) \equiv\left({\begin{array}{*{20}c}
  \vspace{1.5mm}X_2 &  \\
                    & P_0^*A_{02}P_0
  \end{array}}\right)
\end{align*}
\end{footnotesize}
that $X_2$ is also positive definite. Moreover, we have
$$
N_3(\xi,0)=\xi \Lambda_2^{-1} N_2^*(\xi,0)A_{01}.
$$ 
Then we can write
\begin{align}
M(\xi,0)=&\xi\left({\begin{array}{*{20}c}
  \vspace{1.5mm}X_1A_{01} & N_2(\xi,0)\\
  0 & I_{r-n^o}
  \end{array}}\right)^{-1}\left({\begin{array}{*{20}c}
  \vspace{1.5mm}-A_{11}^{-1}A_{01} & 0\\
  \Lambda_2^{-1}N_2^*(\xi,0)A_{01} & -\Lambda_2^{-1}X_2
  \end{array}}\right)\nonumber\\[2mm]
  =& \xi \left({\begin{array}{*{20}c}
  \vspace{1.5mm}X_1A_{01} & 0\\
  0 & I_{r-n^o}
  \end{array}}\right)^{-1}\left({\begin{array}{*{20}c}
  \vspace{1.5mm}I_{n-r}  & -N_2(\xi,0)\\
  0 & I_{r-n^o}
  \end{array}}\right) 
    \left({\begin{array}{*{20}c}
  \vspace{1.5mm}-A_{11}^{-1} & 0\\
  0 & -\Lambda_{2}^{-1}
  \end{array}}\right)\nonumber\\[2mm]
  &\ \ \left({\begin{array}{*{20}c}
  \vspace{1.5mm}I_{n-r}  & 0\\
  -N_2^*(\xi,0) & I_{r-n^o}
  \end{array}}\right)\left({\begin{array}{*{20}c}
  \vspace{1.5mm}A_{01}  & 0\\
                   0    & X_2
  \end{array}}\right).\label{2.27}
\end{align}
This indicates that $\frac{1}{\xi}M(\xi,0)$ has the same numbers of positive and negative eigenvalues as 
$$
    \left({\begin{array}{*{20}c}
  \vspace{1.5mm}-A_{11}^{-1} & 0\\
  0 & -\Lambda_{2}^{-1}
  \end{array}}\right)
$$
since $A_{01}$, $X_1$ and $X_2$ are all positive definite.
Hence, $M(\xi,0)$ has $n^+$ stable eigenvalues and the proof is complete.
 \\
\end{proof}

With Lemma \ref{lemma2.3}, we turn to the boundary condition 
$B\hat{U}(0)=0$
in \eqref{2.6}.
In terms of $\hat{V}=\Phi\hat{U}$ given in Lemma \ref{lemma2.1}, the boundary condition can be written as 
\begin{equation}\label{2.28}
	0=B\Phi^*\hat{V}(0)\\[2mm]
 =(B_u,B_v)\left({\begin{array}{*{20}c}
  \vspace{1.5mm}I_{n-r} & 0\\
                 0 & \tilde{P}
  \end{array}}\right)\hat{V}(0).
\end{equation}
Recall that $\tilde{P}=(P_2, P_0)$, where $P_2$ and $P_0$ are the respective $r\times (r-n^o)$ and $r\times n^o$ matrices. And note that  
$$
  \hat{V}=\left({\begin{array}{*{20}c}
  \vspace{1.5mm}\hat{V}^{I}\\
                \hat{V}^{II}
  \end{array}}\right)=
  \left({\begin{array}{*{20}c}
  \vspace{1.5mm}I_{n-n^o}\\
                E(\xi,\eta)
  \end{array}}\right)\hat{V}^{I}=\left({\begin{array}{*{20}c}
  \vspace{1.5mm}I_{n-r} & 0\\
  \vspace{1.5mm}0 & I_{r-n^o}\\
                E_1(\xi,\eta) & E_2(\xi,\eta)
  \end{array}}\right)\hat{V}^I
$$
due to \eqref{2.25} where $E(\xi,\eta)$ is defined. Then the boundary condition \eqref{2.28} can be expressed as
\begin{align}
0=&(B_u,B_v)\left({\begin{array}{*{20}c}
  \vspace{1.5mm}I_{n-r} & 0 & 0\\
  0 & P_2 & P_0
  \end{array}}\right)\left({\begin{array}{*{20}c}
  \vspace{1.5mm}I_{n-r} & 0\\
  \vspace{1.5mm}0 & I_{r-n^o}\\
                E_1(\xi,\eta) & E_2(\xi,\eta)
  \end{array}}\right)\hat{V}^I(0)\nonumber\\[2mm]
  \equiv&(B_u,B_v)\left({\begin{array}{*{20}c}
  \vspace{2mm}I_{n-r} & 0\\
               N_6(\xi,\eta) & N_5(\xi,\eta)
  \end{array}}\right)\hat{V}^I(0)\label{2.29}
\end{align}
with
\begin{eqnarray}\label{2.30} 
\left\{{\begin{array}{*{20}l}
N_5(\xi,\eta)=P_2+P_0E_2(\xi,\eta)=P_2-P_0S_{00}^{-1}S_{02},\\[3mm]
N_6(\xi,\eta)=P_0E_1(\xi,\eta)=-\xi P_0S_{00}^{-1}P_0^*A_{12}^*A_{11}^{-1}A_{01}.
  \end{array}}\right.
\end{eqnarray}

Let $R_M^S(\xi,\eta)$ denote the right-stable matrix of $M(\xi,\eta)$ (see \cite{Y2} for the definition of right-stable matrices). Lemma \ref{lemma2.3} indicates that $R_M^S(\xi,\eta)$ is an $(n-n^o)\times n^+$-matrix. 
If the following $n^+\times n^+$-matrix 
\begin{equation}\label{2.31}
  (B_u,B_v)\left({\begin{array}{*{20}c}
  \vspace{1.5mm}I_{n-r} & 0\\
  N_6(\xi,\eta) & N_5(\xi,\eta)
  \end{array}}\right)R_M^S(\xi,\eta)
\end{equation}
is not invertible for a certain $(\xi,\eta)$ with $\eta>0$, its null space contains a nonzero vector $\zeta$. Thus the nonzero vector $\hat{V}^I(0)=R_M^S(\xi,\eta)\zeta$ satisfies the boundary condition \eqref{2.29}. With this $\hat{V}^I(0)$ as initial value, the ordinary differential equation $\hat{V}^I_x=M(\xi,\eta)\hat{V}^I$ has a bounded solution for $x>0$. In view of Lemma \ref{lemma2.1}, the problem \eqref{2.6} has a bounded solution $\hat{U}$. 

In conclusion, we have finished the task to find the condition for the problem \eqref{2.6} to have a bounded solution. It is that the matrix in \eqref{2.31} is not invertible.

Motivated by the last conclusion, we propose the following\\

\textbf{Modified Generalized Kreiss Condition.} There exists a constant $c_K>0$ such that 
$$\bigg|\det\bigg\{B\left({\begin{array}{*{20}c}
  \vspace{1.5mm}I_{n-r} & 0\\
  N_6(\xi,\eta) & N_5(\xi,\eta)
  \end{array}}\right)R_M^S(\xi,\eta)\bigg\}\bigg|\geq c_K\sqrt{\det\{R_M^{S*}(\xi,\eta)R_M^S(\xi,\eta)\}}$$
for all $\eta\geq 0$ and $\xi$ with $Re \xi>0$. Here $N_5(\xi,\eta)$ and $N_6(\xi,\eta)$ are defined in \eqref{2.30}.\\

About this modified GKC, we have the following interesting remark.
\begin{remark}\label{remark2.1}
In the absence of $S$ or $\eta$, this modified GKC recovers the standard Kreiss condition \eqref{2.3}. 
When $n^o=0$, we recover the GKC in \cite{Y2} for non-characteristic IBVPs.  
We omit the proofs of these conclusions and leave
them for the interested reader. 
\end{remark}

\subsection{Reduced Boundary Condition}\label{subsection2.3}

Under the modified GKC, we derive 
the reduced boundary condition for the equilibrium system 
$$
A_{01}u_t+A_{11}u_x=0
$$
from the relaxation problem \eqref{2.1} with its boundary condition \eqref{2.2}.
Recall that $A_{11}$ has $n_1^+$ positive eigenvalues. According to the classical theory \cite{BS,GKO} for hyperbolic IBVPs, $n_1^+$ linear independent boundary conditions should be prescribed at the boundary $x=0$ for the equilibrium system. 

For this purpose, we follow \cite{Y2} and construct an approximate solution of the form
\begin{align*}
U_\epsilon=\left({\begin{array}{*{20}c}
  \vspace{1.5mm}u_\epsilon\\
                v_\epsilon
  \end{array}}\right)(x,t)=\left({\begin{array}{*{20}c}
  \vspace{1.5mm}\bar{u}\\
                \bar{v}
  \end{array}}\right)(x,t)+\left({\begin{array}{*{20}c}
  \vspace{1.5mm}\mu\\
                \nu
  \end{array}}\right)(x/\epsilon,t).
\end{align*}
As the outer solution, the first part solves 
\begin{gather}
A_{01}\bar{u}_t+A_{11}\bar{u}_x= 0,\qquad \bar{v}= 0, \label{2.32}
\end{gather}
while the boundary-layer correction satisfies  
\begin{align}
\left({\begin{array}{*{20}c}
  \vspace{1.5mm} A_{11} &  A_{12} \\
   A_{12}^* &  A_{22} 
  \end{array}}\right)\left({\begin{array}{*{20}c}
  \vspace{1.5mm}\mu \\
                \nu
  \end{array}}\right)'=\left({\begin{array}{*{20}c}
  \vspace{1.5mm}0 &  \\
                  & S
  \end{array}}\right)\left({\begin{array}{*{20}c}
  \vspace{1.5mm}\mu \\
                \nu
  \end{array}}\right).\label{2.33}
\end{align}
Moreover, the boundary condition \eqref{2.2} should be satisfied:  
\begin{eqnarray}\label{2.34}
	(B_u,B_v)\left({\begin{array}{*{20}c}
  \vspace{1.5mm}\bar{u}+\mu \\
  \bar{v}+\nu
  \end{array}}\right)(0,t)=b(t).
\end{eqnarray}
This together with \eqref{2.32} gives
\begin{equation}\label{2.35}
	B_u\bar{u}(0,t)+(B_u,B_v)\left({\begin{array}{*{20}c}
  \vspace{1.5mm} \mu \\
                 \nu
  \end{array}}\right)(0,t)=b(t).
\end{equation}

Next we turn to the boundary-layer equation \eqref{2.33}, which has the form of the equation in 
\eqref{2.6} with $\xi=0$ and $\eta=1$. Then Lemma \ref{lemma2.1} applies and we have
\begin{align}\label{2.36}
\left({\begin{array}{*{20}c}
  \vspace{1.5mm}I_{n-r}+N_1(0,1) & N_2(0,1)\\
  0 & I_{r-n^o}
  \end{array}}\right)\left({\begin{array}{*{20}c}
  \vspace{1.5mm} \mu \\
                 w_2
  \end{array}}\right)'=& \left({\begin{array}{*{20}c}
  \vspace{1.5mm}0 & 0\\
  N_3(0,1) & N_4(0,1)
  \end{array}}\right)\left({\begin{array}{*{20}c}
  \vspace{1.5mm} \mu \\
                 w_2
  \end{array}}\right)
\end{align}
and
\begin{align}\label{2.37}
w_0= E_1(0,1)\mu +E_2(0,1)w_2.
\end{align}
These equations are analogues of \eqref{2.21} and \eqref{2.24} with \eqref{2.25} in the proof of Lemma \ref{lemma2.1}, with
$$
w_2=P_2^*\nu, \qquad w_0=P_0^*\nu 
$$ 
similar to those in the decomposition \eqref{2.15}. Then we have
\begin{equation}\label{2.38} 
  \nu=P_0w_0+P_2w_2=[P_2-P_0(P_0^*SP_0)^{-1}P_0^*SP_2]w_2\equiv G_1w_2.
\end{equation} 
In this case, \eqref{2.22} becomes
\begin{eqnarray}\label{2.39}
\left\{{\begin{array}{*{20}l}
N_1(0,1) = 0,\\[3mm]
N_2(0,1) = A_{11}^{-1}A_{12}[P_2-P_0(P_0^*SP_0)^{-1}P_0^*SP_2]= A_{11}^{-1}A_{12}G_1,\\[3mm]
N_3(0,1) = 0 ,\\[3mm]
N_4(0,1) = \Lambda_2^{-1}[P_2^*SP_2-P_2^*SP_0(P_0^*SP_0)^{-1}P_0^*SP_2]\equiv \Lambda_2^{-1}G_2.
  \end{array}}\right.
\end{eqnarray}
Thus the equation \eqref{2.36} can be simplified as 
\begin{align}\label{2.40} 
\left({\begin{array}{*{20}c}
  \vspace{1.5mm}I_{n-r}  &  A_{11}^{-1}A_{12}G_1\\
  0 & I_{r-n^o}
  \end{array}}\right)\left({\begin{array}{*{20}c}
  \vspace{1.5mm} \mu \\
                 w_2
  \end{array}}\right)'=&  \left({\begin{array}{*{20}c}
  \vspace{1.5mm}0 & 0\\
  0 & \Lambda_2^{-1}G_2
  \end{array}}\right)\left({\begin{array}{*{20}c}
  \vspace{1.5mm} \mu \\
                 w_2
  \end{array}}\right).
\end{align}
Moreover, \eqref{2.25} becomes
$$
E_1(0,1)=0,\qquad E_2(0,1)=-(P_0^*SP_0)^{-1}P_0^*SP_2.
$$
Then \eqref{2.37} can be written as
\begin{equation}\label{2.41} 
  w_0=-(P_0^*SP_0)^{-1}P_0^*SP_2w_2.
\end{equation}

As boundary-layer corrections, we have $\mu(\infty)=0$ and $w_2(\infty)=P_2^*\nu(\infty)=0$.
Then it follows from the equation \eqref{2.40} that 
\begin{equation}\label{2.42} 
	\mu=-A_{11}^{-1}A_{12}G_1w_2	
\end{equation}
and 
\begin{equation}\label{2.43} 
	w_2'=\Lambda_2^{-1}G_2w_2.
\end{equation}
From the last equation, we see that the initial value for $w_2$ should satisfy     
\begin{equation}\label{2.44}
  L_2^Uw_2(0)=0,
\end{equation}
in order to have a bounded solution $w_2=w_2(x/\epsilon)$. 
Here $L_2^U$ is the left-unstable matrix of $\Lambda_2^{-1} G_2$. 
Recall \eqref{2.12} where $\Lambda_2$ is defined. We notice that $\Lambda_2$ has $(r-n^o-n^++n_1^+)$ negative eigenvalues. Moreover, $G_2$ can be shown to be negative definite with the argument following \eqref{2.26}. Therefore, $L_2^U$ is an $(r-n^o-n^++n_1^+) \times (r-n^o)$-matrix. 

On the other hand, we substitute \eqref{2.38} and \eqref{2.42} into the boundary condition \eqref{2.35} to obtain
\begin{equation}\label{2.45}
   B_u\bar{u}(0,t) +(B_v-B_uA_{11}^{-1}A_{12})G_1w_2(0)=b(t).
\end{equation}
For this, we decompose $\bar{u}(0,t)$ as
\begin{equation}\label{2.46}
\bar{u}(0,t)=R_1^U \alpha + R_1^S \beta. 
\end{equation}
Here $R_1^U$ and $R_1^S$ are right-unstable and right-stable matrices of the invertible matrix $A_{01}^{-1}A_{11}$, respectively. By this decomposition, vectors $\alpha$ and $\beta$ are the respective incoming and outgoing modes for the equilibrium system. Since $A_{11}$ has $n_1^+$ positive eigenvalues and $(n-r-n_1^+)$ negative eigenvalues, the matrices $R_1^U$ and $R_1^S$ are of orders $(n-r)\times n_1^+$ and $(n-r)\times (n-r-n_1^+)$, respectively. With the above decomposition, \eqref{2.45} becomes 
\begin{equation*} 
   B_uR_1^U \alpha +(B_v-B_uA_{11}^{-1}A_{12})G_1w_2(0)=b(t)-B_uR_1^S \beta.
\end{equation*}
Combining this with \eqref{2.44}, we arrive at 
$$ 
  \left({\begin{array}{*{20}c}
  \vspace{1.5mm}B_uR_1^U & (B_v-B_uA_{11}^{-1}A_{12})G_1\\
  0 & L_2^U
  \end{array}}\right)\left({\begin{array}{*{20}c}
  \vspace{1.5mm} \alpha\\
                 w_2(0)
  \end{array}}\right)=
  \left({\begin{array}{*{20}c}
  \vspace{1.5mm} b(t)-B_uR_1^S \beta\\
                 0
  \end{array}}\right). 
$$
This is a system of $(n_1^++r-n^o)$ linear algebraic equations for $(n_1^++r-n^o)$ variables $(\alpha, w_2(0))$, since $B$ has $n^+$ rows, $\alpha$ has $n_1^+$ components, and $w_2(0)$ has $(r-n^o)$ components. 

If the above coefficient matrix is invertible, we can express $\alpha$ and $w_2(0)$ in terms of $\beta$. In this way, we obtain the reduced boundary condition for the equilibrium system in \eqref{2.32} and the initial value $w_2(0)$ for the boundary-layer equation \eqref{2.43}.  With these, we can uniquely determine $w_2$ by solving \eqref{2.43} and $\bar{u}$ by solving \eqref{2.32} with proper initial data. Consequently, the boundary-layer correction $(\mu,\nu)$ and thereby the approximate solution $U_\epsilon$ are constructed uniquely.

To clarify the invertibility, we establish the following analogue of Lemma 3.4 in \cite{Y2}:

\begin{lemma}\label{lemma2.4}
If the boundary matrix $B=(B_u,B_v)$ in \eqref{2.2} satisfies the modified GKC, then the matrix 
$$
\left({\begin{array}{*{20}c}
  \vspace{1.5mm}B_uR_1^U & (B_v-B_uA_{11}^{-1}A_{12})G_1\\
  0 & L_2^U
  \end{array}}\right)
$$
is invertible.
\end{lemma}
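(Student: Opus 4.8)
The plan is to read the invertibility off the modified GKC in the singular limit $\eta\to\infty$, using the explicit block formulas produced in the (deliberately tedious) proof of Lemma \ref{lemma2.1}. First I would recast the statement as an equivalent square condition of size $n^+$. Let $R_2^S$ be the right-stable matrix of $\Lambda_2^{-1}G_2$, so that the stable subspace of the boundary-layer operator in \eqref{2.43} is exactly $\ker L_2^U=\mathrm{range}\,R_2^S$, of dimension $n^+-n_1^+$. If $(\alpha,w_2(0))$ lies in the kernel of the matrix in the statement, the bottom block $L_2^Uw_2(0)=0$ forces $w_2(0)=R_2^S\gamma$ for a unique $\gamma$; since $R_2^S$ is injective, $(\alpha,w_2(0))=0$ iff $(\alpha,\gamma)=0$, and substituting into the top block shows that the matrix in the statement is invertible iff the $n^+\times n^+$ matrix
\[
\mathcal{K}^\infty:=\big(\,B_uR_1^U,\ (B_v-B_uA_{11}^{-1}A_{12})G_1R_2^S\,\big)
\]
is invertible. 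It is $\mathcal{K}^\infty$ that I would extract from the Kreiss determinant \eqref{2.31}.

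Next I would show that $\mathcal{K}^\infty$ is the limit of the (normalized) matrix \eqref{2.31} as $\eta\to\infty$ at any fixed $\xi$ with $Re\,\xi>0$. From \eqref{2.16} one has $S_{00}^{-1}\to 0$ and $S_{00}^{-1}S_{02}\to(P_0^*SP_0)^{-1}P_0^*SP_2$, so by \eqref{2.30} $N_6(\xi,\eta)\to 0$ and $N_5(\xi,\eta)\to G_1$, whence the boundary prefactor in \eqref{2.31} tends to $(B_u,\,B_vG_1)$. For the stable subspace I would perform the singular-perturbation analysis of $M(\xi,\eta)$ in \eqref{2.23}: by \eqref{2.22}, $N_1\to 0$, $N_2\to A_{11}^{-1}A_{12}G_1$ and $N_3$ stays bounded, while $N_4\sim\eta\,\Lambda_2^{-1}G_2$ blows up. Hence the $n^+$ stable eigenvalues guaranteed by Lemma \ref{lemma2.3} split into $n_1^+$ bounded (``slow'') ones, governed in the limit by $\hat u_x=-\xi A_{11}^{-1}A_{01}\hat u$ with $\hat w_2\to 0$, and $n^+-n_1^+$ eigenvalues of size $O(\eta)$ (``fast''), for which the first block row of \eqref{2.21} forces $\hat u\to -A_{11}^{-1}A_{12}G_1\hat w_2$ (compare \eqref{2.42}) with $\hat w_2$ in the stable subspace of $\Lambda_2^{-1}G_2$. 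The slow limit lands in $\mathrm{range}\,R_1^U$ because the stable subspace of $-\xi A_{11}^{-1}A_{01}$ coincides with the positive-eigenvalue (incoming) subspace of $A_{01}^{-1}A_{11}$ for every $Re\,\xi>0$. Choosing a basis of the stable invariant subspace adapted to this splitting and normalizing the slow (resp. fast) columns in their $\hat u$- (resp. $\hat w_2$-) components, $R_M^S(\xi,\eta)$ converges to
\[
\begin{pmatrix} R_1^U & -A_{11}^{-1}A_{12}G_1R_2^S\\[1mm] 0 & R_2^S\end{pmatrix},
\]
and multiplying $(B_u,B_vG_1)$ against this limit collapses precisely to $\mathcal{K}^\infty$.

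Finally I would pass the modified GKC to the limit. The ratio $|\det\{\cdots\}|/\sqrt{\det\{R_M^{S*}R_M^S\}}$ in the modified GKC is invariant under right-multiplication of $R_M^S$ by any invertible matrix, hence independent of the basis and stable under the normalization above; moreover the limiting columns displayed above are manifestly linearly independent, so $\det\{R_M^{S*}R_M^S\}$ tends to a strictly positive value. Letting $\eta\to\infty$ in the uniform inequality $|\det\{\cdots\}|\ge c_K\sqrt{\det\{R_M^{S*}R_M^S\}}$ therefore yields $|\det\mathcal{K}^\infty|\ge c_K\cdot(\text{positive limit})>0$, so $\mathcal{K}^\infty$ is invertible, and by the first paragraph so is the matrix in the statement.

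I expect the main obstacle to be the second step: making the $\eta\to\infty$ splitting of the stable subspace rigorous. Since $N_4$ blows up linearly in $\eta$ this is a genuine singular perturbation, and one must rule out stable eigenvalues drifting to an intermediate scale, show that the slow and fast spectral projections (hence the normalized columns and their Gram determinant) converge to the nondegenerate limit claimed, and control the $O(1/\eta)$ errors underlying $\hat w_2\to 0$ and $\hat u\to -A_{11}^{-1}A_{12}G_1\hat w_2$. This is exactly what the explicit formulas \eqref{2.22} and \eqref{2.30} from Lemma \ref{lemma2.1} are designed to make tractable. It is also worth stressing that the uniform constant $c_K$, rather than mere pointwise non-vanishing of the Kreiss determinant, is what allows the condition to survive the passage to $\eta=\infty$.
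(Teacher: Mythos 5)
Your proposal is correct and follows essentially the same route as the paper: reduce the claim to invertibility of $\bigl(B_uR_1^U,\ (B_v-B_uA_{11}^{-1}A_{12})G_1R_2^S\bigr)$ via $\ker L_2^U=\mathrm{range}\,R_2^S$, then obtain that matrix by letting $\eta\to\infty$ in the modified GKC, using $N_6\to 0$, $N_5\to G_1$ and the slow/fast splitting of $R_M^S(\xi,\eta)$. The one step you flag as the main obstacle --- rigorously separating the $O(1)$ and $O(\eta)$ stable eigenvalues and the convergence of the corresponding invariant subspaces --- is handled in the paper by conjugating $M(\xi,\eta)$ to $\eta\bar M(1/\eta)$ with $\bar M$ analytic at $0$ and invoking Kato's analytic block-diagonalization, which is exactly the tool your sketch calls for.
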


\begin{proof}
Like that for Lemma 3.4 in \cite{Y2}, the key of this proof is to find an asymptotic expression of $R_M^S(\xi,\eta)$ in the modified GKC for large $\eta$ and fixed $\xi$. Recall that $R_M^S(\xi,\eta)$ is the right-stable matrix of $M(\xi,\eta)$ and the matrix $M(\xi,\eta)$ is defined in \eqref{2.23}. Thus we need to expand the $N_i(\xi,\eta)$'s defined in \eqref{2.22} into the power series of $1/\eta$.

From \eqref{2.22} and the analytic expansion 
$$
S_{00}^{-1}=\frac{1}{\eta}\left[P_0^*(S- \frac{\xi}{\eta} A_{02})P_0\right]^{-1}=\frac{1}{\eta}(P_0^*SP_0)^{-1}+O(\frac{1}{\eta^2}),
$$
it is not difficult to deduce that 
\begin{align*} 
N_1(\xi,\eta)&=O(\frac{1}{\eta}),\\[3mm]
N_2(\xi,\eta)&=A_{11}^{-1}A_{12}[P_2-P_0(P_0^*SP_0)^{-1}(P_0^* S P_2)]+O(\frac{1}{\eta})\\[3mm]
&= A_{11}^{-1}A_{12}G_1+O(\frac{1}{\eta}),\\[3mm]
N_3(\xi,\eta)&=O(\eta^{0}),\\[3mm]
N_4(\xi,\eta)&=\eta\Lambda_2^{-1}[P_2^* S P_2 - (P_2^* S P_0) (P_0^* S P_0)^{-1} ( P_0^* S P_2)]+O(\eta^0)\\[3mm]
&= \eta\Lambda_2^{-1}G_2+O(\eta^{0}).
\end{align*}
Here $G_1$ and $G_2$ are defined in \eqref{2.38} and \eqref{2.39}. 
Thus we see from \eqref{2.23} that 
\begin{align*}
M(\xi,\eta)
=& \left({\begin{array}{*{20}c}
  \vspace{2mm}I_{n-r}+O(\frac{1}{\eta}) &  A_{11}^{-1}A_{12}G_1+O(\frac{1}{\eta}) \\
  0 & I_{r-n^o}
  \end{array}}\right)^{-1}
  \left({\begin{array}{*{20}c}
  \vspace{2mm}-\xi A_{11}^{-1}A_{01} & 0\\
  O(\eta^0) & \eta\Lambda_2^{-1}G_2+O(\eta^0)
  \end{array}}\right) \\[2mm]
=&\left({\begin{array}{*{20}c}
  \vspace{1.5mm}I_{n-r} & -A_{11}^{-1}A_{12}G_1\\
  0 & I_{r-n^o}
  \end{array}}\right)
  \left({\begin{array}{*{20}c}
  \vspace{1.5mm}-\xi A_{11}^{-1}A_{01} & O(\eta^0)\\
  O(\eta^0) & \eta\Lambda_2^{-1}G_2+O(\eta^0)
  \end{array}}\right)+O(\frac{1}{\eta}).
\end{align*}
Then a simple computation shows
\begin{align}
    &\left({\begin{array}{*{20}c}
  \vspace{1.5mm}I_{n-r} & -A_{11}^{-1}A_{12}G_1\\
  0 & I_{r-n^o}
  \end{array}}\right)^{-1}M(\xi,\eta)\left({\begin{array}{*{20}c}
  \vspace{1.5mm}I_{n-r} & -A_{11}^{-1}A_{12}G_1\\
  0 & I_{r-n^o}
  \end{array}}\right)\nonumber\\[2mm]
  =&~\eta\left({\begin{array}{*{20}c}
  \vspace{1.5mm}0 & 0\\
  0 & \Lambda_2^{-1}G_2
  \end{array}}\right)
  +\left({\begin{array}{*{20}c}
  \vspace{1.5mm}-\xi A_{11}^{-1}A_{01} & O(\eta^0)\\
  O(\eta^0) & O(\eta^0)
  \end{array}}\right)+O(\frac{1}{\eta}) \nonumber\\[2mm]
  \equiv &~\eta \bar{M}(1/\eta). \label{2.47}
\end{align}

Since $\bar{M}(\sigma)$ is analytic and $\Lambda_2^{-1}G_2$ is invertible, we know from \cite{Ka} that there is an invertible matrix $T(\sigma)$, defined in the neighborhood of $\sigma=0$, such that $T(\sigma)$ and $T^{-1}(\sigma)$ are analytic at $\sigma=0$, $T(0)=I_{n-n^o}$, and 
\begin{eqnarray}\label{2.48}
	T^{-1}(\sigma)\bar{M}(\sigma)T(\sigma)=
  \left({\begin{array}{*{20}c}
  \vspace{1.5mm}\bar{M}_1(\sigma) & 0\\
  0 & \bar{M}_2(\sigma)
  \end{array}}\right)
\end{eqnarray}
holds for all sufficiently small $\sigma$. Moreover, it holds that 
$$
\lim_{\eta\rightarrow \infty}\eta\bar{M}_1(1/\eta)=-\xi A_{11}^{-1}A_{01},\qquad \bar{M}_2(0)=\Lambda_2^{-1}G_2. 
$$
Thus $M(\xi,\eta)$ is similar to the block-diagonal matrix $\text{diag}(\eta \bar{M}_1(1/\eta), \eta \bar{M}_2(1/\eta))$ for all sufficiently large $\eta$. 
From the argument following \eqref{2.44} we know that the $(r-n^o)\times (r-n^o)$-matrix $\bar{M}_2(0)$ has $(n^+-n_1^+)$ stable eigenvalues and $(r-n^o-n^++n_1^+)$ unstable eigenvalues. Therefore, for all sufficiently large $\eta$, $\bar{M}_2(1/\eta)$ has the same numbers of stable and unstable eigenvalues as $\bar{M}_2(0)$. Similarly, $\eta\bar{M}_1(1/\eta)$ has $n_1^+$ stable eigenvalues and $(n-r-n_1^+)$ unstable eigenvalues.

Denote by $\bar{R}_1^S(1/\eta)$ and $\bar{R}_2^S(1/\eta)$ the respective right-stable matrices of $\eta\bar{M}_1(1/\eta)$ and $\bar{M}_2(1/\eta)$. According to the previous discussion, they are of orders $(n-r)\times n_1^+$ and $(r-n^o) \times n_2^+$.
By using \eqref{2.47} and \eqref{2.48}, we can directly verify that  
\begin{equation*} 
  R_M^S(\xi,\eta)\\
           =\left({\begin{array}{*{20}c}
  \vspace{1.5mm}I_{n-r} & -A_{11}^{-1}A_{12}G_1\\
  0 & I_{r-n^o}
  \end{array}}\right)T(\frac{1}{\eta})
  \left({\begin{array}{*{20}c}
  \vspace{1.5mm}\bar{R}_1^S(1/\eta) & 0\\
  0 & \bar{R}_2^S(1/\eta)
  \end{array}}\right) 
\end{equation*}
is a right-stable matrix of $M(\xi,\eta)$ when $\eta$ is sufficiently large.

As $\eta\rightarrow \infty$, we get
\begin{align*}
R_M^S\equiv & \lim_{\eta\rightarrow \infty} R_M^S(\xi,\eta)\\
           =& \left({\begin{array}{*{20}c}
  \vspace{1.5mm}I_{n-r} & -A_{11}^{-1}A_{12}G_1\\
  0 & I_{r-n^o}
  \end{array}}\right)
  \left({\begin{array}{*{20}c}
  \vspace{1.5mm}\bar{R}_1^S(0) & 0\\
  0 & \bar{R}_2^S(0)
  \end{array}}\right)\\[2mm]
  =& \left({\begin{array}{*{20}c}
  \vspace{1.5mm}\bar{R}_1^S(0) & -A_{11}^{-1}A_{12}G_1\bar{R}_2^S(0)\\
  0 & \bar{R}_2^S(0)
  \end{array}}\right).
\end{align*}
Here $\bar{R}_1^S(0)$ is a right-stable matrix of $-\xi A_{11}^{-1}A_{01}$ and $\bar{R}_2^S(0)$ is a right-stable matrix of $\Lambda_2^{-1}G_2$. 
Additionally, as $\eta \rightarrow \infty$, the matrices $N_5(\xi,\eta)$ and $N_6(\xi,\eta)$ in \eqref{2.30} have the limits
\begin{align*}
N_6(\xi,\eta)\rightarrow 0,\quad 
N_5(\xi,\eta)\rightarrow G_1.
\end{align*}
Now we let $\eta \rightarrow \infty$ in the modified GKC and find that the matrix
\begin{align*}
  &(B_u,B_vG_1)\left({\begin{array}{*{20}c}
  \vspace{1.5mm}\bar{R}_1^S(0) & -A_{11}^{-1}A_{12}G_1\bar{R}_2^S(0)\\
  0 & \bar{R}_2^S(0)
  \end{array}}\right)
  =\left(
  B_u\bar{R}_1^S(0), \ (B_v-B_uA_{11}^{-1}A_{12})G_1\bar{R}_2^S(0)
  \right)
\end{align*}
is invertible. 
Since $Re\xi>0$, $\bar{R}_1^S(0)$ is actually a right-unstable matrix of $A_{11}^{-1}A_{01}$ and thereby is equal to $R_1^U$ in \eqref{2.46}. Consequently, the matrix 
$$
BR_M^S=\left(
  B_uR_1^U, \ (B_v-B_uA_{11}^{-1}A_{12})G_1R_2^S
  \right)
$$
is invertible, where $R_2^S=\bar{R}_2^S(0)$ is a right-stable matrix of $\Lambda_2^{-1}G_2$.

At last, let $R_2^U$ be a right-unstable matrix of the invertible matrix $\Lambda_2^{-1}G_2$. Then the matrices $(R_2^S,~R_2^U)$ and $L_2^UR_2^U$ are both invertible. Hence, the lemma follows from the simple relation
\begin{align*}
  &\left({\begin{array}{*{20}c}
  \vspace{1.5mm}B_uR_1^U & (B_v-B_uA_{11}^{-1}A_{12})G_1\\
  0  & L_2^U
  \end{array}}\right) 
  \left({\begin{array}{*{20}c}
  \vspace{1.5mm}I_{n_1^+} & 0  & 0 \\
  0  & R_2^S & R_2^U
  \end{array}}\right)\\[4mm]
  =& \left({\begin{array}{*{20}c}
  \vspace{1.5mm}B_uR_1^U & (B_v-B_uA_{11}^{-1}A_{12})G_1R_2^S & (B_v-B_uA_{11}^{-1}A_{12})G_1R_2^U\\
  0  & 0  & L_2^UR_2^U
  \end{array}}\right).
\end{align*}
\end{proof}

Having Lemma \ref{lemma2.4}, we turn to establish the main result of this subsection.

\begin{theorem}\label{thm2.5}
Under the assumptions in Subsection \ref{subsection2.1} and the modified GKC, there exists a full-rank $n_1^+\times n^+$-matrix $B_p$ such that 
the relation 
\begin{equation}\label{2.49}
  B_pB_u\bar{u}(0,t)=B_pb(t),
\end{equation}
as a boundary condition for the equilibrium system, satisfies the Kreiss condition
$$
\det\{B_pB_uR_1^U\}\neq 0 
$$
with $R_1^U$ a right-unstable matrix of $A_{01}^{-1}A_{11}$. 
\end{theorem}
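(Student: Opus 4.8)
The plan is to extract $B_p$ directly from the invertibility already proved in Lemma~\ref{lemma2.4}, and then to verify that this single matrix both annihilates the boundary-layer term in \eqref{2.45} and satisfies the Kreiss condition. Throughout I abbreviate $D:=B_uR_1^U$ (an $n^+\times n_1^+$-matrix) and $C:=(B_v-B_uA_{11}^{-1}A_{12})G_1R_2^S$ (an $n^+\times(n^+-n_1^+)$-matrix), where $R_2^S$ is the right-stable matrix of $\Lambda_2^{-1}G_2$ appearing in the proof of Lemma~\ref{lemma2.4}. That lemma says precisely that the $n^+\times n^+$-matrix $BR_M^S=(D,\,C)$ is invertible.

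First I would \emph{define} $B_p$ to be the top $n_1^+$ rows of the inverse $(BR_M^S)^{-1}=(D,\,C)^{-1}$. Partitioning the rows of $(D,C)^{-1}$ into its top $n_1^+$ rows $B_p$ and its remaining $(n^+-n_1^+)$ rows and reading off the identity $(D,C)^{-1}(D,C)=I_{n^+}$ blockwise yields at once $B_pD=I_{n_1^+}$ and $B_pC=0$. Since $B_p$ is made of $n_1^+$ rows of an invertible matrix it has full rank, and moreover $\det\{B_pB_uR_1^U\}=\det\{B_pD\}=1\neq0$, so the Kreiss condition in the statement holds automatically.

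Next I would derive the reduced condition itself. The admissible data $w_2(0)$ for the boundary-layer equation \eqref{2.43} are exactly those satisfying the constraint \eqref{2.44}, i.e. $w_2(0)\in\ker L_2^U$. Because $L_2^U$ is the left-unstable matrix and $R_2^S$ the right-stable matrix of the invertible matrix $\Lambda_2^{-1}G_2$, one has $\ker L_2^U=\mathrm{range}\,R_2^S$, so every admissible $w_2(0)$ can be written as $w_2(0)=R_2^S\gamma$. Applying $B_p$ to the boundary relation \eqref{2.45} and inserting this form, the boundary-layer term becomes $B_p(B_v-B_uA_{11}^{-1}A_{12})G_1R_2^S\gamma=B_pC\gamma=0$, and what survives is exactly $B_pB_u\bar u(0,t)=B_pb(t)$. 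Using the decomposition \eqref{2.46} together with $B_pD=I_{n_1^+}$, this reads $\alpha=B_pb(t)-B_pB_uR_1^S\beta$, so the reduced condition indeed determines the incoming modes $\alpha$ uniquely from the outgoing modes $\beta$ and the data $b(t)$, confirming it is a legitimate boundary condition for the equilibrium system.

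I expect the only genuine content to be the two structural facts underlying the construction, both already available in the excerpt: the invertibility of $BR_M^S$, which is Lemma~\ref{lemma2.4} and is where the modified GKC actually enters, and the identity $\ker L_2^U=\mathrm{range}\,R_2^S$, the standard relation between the left-unstable and right-stable matrices of the nonsingular $\Lambda_2^{-1}G_2$. Everything else is the elementary block-inverse bookkeeping above. Should a coordinate-free phrasing be preferred, one may instead take $B_p$ to be any full-rank left-annihilator of $C$; its row space has dimension $n^+-\mathrm{rank}\,C=n_1^+$ precisely because $(D,C)$ is invertible, and the Kreiss condition $\det\{B_pD\}\neq0$ then follows from $\mathrm{range}\,D\cap\mathrm{range}\,C=\{0\}$, which is again a consequence of that invertibility.
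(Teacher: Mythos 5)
Your proposal is correct and follows essentially the same route as the paper: both extract $B_p$ as a full-rank left-annihilator of the boundary-layer block $(B_v-B_uA_{11}^{-1}A_{12})G_1R_2^S$ and obtain the Kreiss condition from the invertibility of $BR_M^S=(D,C)$ established in the proof of Lemma~\ref{lemma2.4}; your choice of $B_p$ as the top $n_1^+$ rows of $(D,C)^{-1}$ is just a convenient normalization of the paper's construction, and your closing derivation of the reduced condition via $\ker L_2^U=\mathrm{range}\,R_2^S$ matches the remark following the theorem.
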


\begin{proof}
Following the proof of Lemma \ref{lemma2.4}, we know that the $n^+\times (n^+-n_1^+)$-matrix $(B_v-B_uA_{11}^{-1}A_{12})G_1R_2^S$ is of full-rank. Then there exists a full-rank $n_1^+ \times n^+$-matrix $B_p$ such that 
\begin{equation}\label{2.50}
  B_p (B_v-B_uA_{11}^{-1}A_{12})G_1R_2^S=0.
\end{equation}
Notice that $R_2^S$ is independent of $\xi$, so is $B_p$. 
Thus it follows from the invertibility of $BR_M^S$ and 
\begin{align*}
B_pBR_M^S=&B_p(B_uR_1^U,\ (B_v-B_uA_{11}^{-1}A_{12})G_1R_2^S)\\[2mm]
  =&(B_pB_uR_1^U, 0) 
\end{align*}
that the square matrix $B_pB_uR_1^U$ is invertible. This completes the proof.
\end{proof}

We end this subsection with the following remark.

\begin{remark}
The boundary condition \eqref{2.49} is derived from \eqref{2.45} by multiplying $B_p$ from the left and by using \eqref{2.50}.  
Here we also use \eqref{2.44} which implies that $w_2(0)\in \text{span}\{R_2^S\}$.   
\end{remark}
\noindent In the next subsection, we will show that the boundary condition \eqref{2.49} is satisfied by the relaxation limit. In this sense, it is referred to as the reduced boundary condition.

\subsection{Validity of the reduced boundary condition}\label{subsection2.4} 

In this subsection, we show that the relaxation limit of smooth solution $U^\epsilon$ to the relaxation system \eqref{2.1} with boundary condition \eqref{2.2} satisfies the boundary condition \eqref{2.49} as well as the equilibrium system \eqref{2.32}. This will be done
by estimating $(U^\epsilon-U_\epsilon)$ with the decomposition method \cite{GKO} mainly based on the modified GKC. Here $U_\epsilon$ is the approximate solution constructed in the previous subsection.

Firstly, we make some assumptions on the initial and boundary data to avoid unnecessary technical difficulties. 
The initial value $U_0(x)$ for the relaxation system \eqref{2.1} is assumed to be in equilibrium:
\begin{equation}\label{2.53}
	U_0(x)=\left({\begin{array}{*{20}c}
  \vspace{1.5mm}u_0(x) \\
                0
\end{array}}\right).
\end{equation}
Here $u_0=u_0(x)$ represents the first $(n-r)$ components of $U_0(x)$. Moreover, we assume that the initial value and the boundary data in \eqref{2.2} are compatible at the corner $(x,t)=(0,0)$:
\begin{equation}\label{2.54}
  BU_0(0)=b(0).
\end{equation} 
These imply 
\begin{equation*}
  B_pB_uu_0(0)=B_pb(0),
\end{equation*}
meaning that the boundary condition \eqref{2.49} is compatible with the initial value $u_0$ for the equilibrium system \eqref{2.32}.

Our main result of this subsection can be stated as  

\begin{theorem}\label{thm2.6}
Under the assumptions in Subsection \ref{subsection2.1} and the modified GKC, let $U_0\in H^1(R^+)$ and $b(t)\in H^1(0,T)$ satisfy \eqref{2.53} and \eqref{2.54}. Then there exists a constant $K>0$ such that the following error estimate 
\begin{align*}
\|(U^\epsilon-U_\epsilon)(\cdot,t)\|_{L^2(R^+)}\leq K \epsilon^{1/2} 
\end{align*}
holds for all time $t\in[0,T]$.
\end{theorem}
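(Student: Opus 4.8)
The plan is to estimate the error $W:=U^\epsilon-U_\epsilon$ by combining an energy estimate, based on the structural stability condition, with a boundary-trace estimate, based on the modified GKC and the Laplace transform, in the spirit of the decomposition method of \cite{GKO}. First I would write down the equation for $W$. Subtracting the equations governing the approximate solution from \eqref{2.1} and using that the outer part solves \eqref{2.32} while the boundary-layer part $\Theta:=(\mu,\nu)(x/\epsilon,t)$ solves \eqref{2.33}, one finds
\begin{equation*}
A_0W_t+A_1W_x=\frac{1}{\epsilon}QW-R,\qquad BW(0,t)=0,\qquad W(x,0)=W_0(x),
\end{equation*}
where $R=(0,\ A_{12}^*\bar u_x)^*+A_0\Theta_t$ and $W_0=-\Theta(x/\epsilon,0)$. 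The boundary condition is homogeneous because $U_\epsilon$ was built to satisfy \eqref{2.2}. Two structural features of $R$ are decisive: its first piece is $O(1)$ but lives entirely in the $v$-component, where $Q=\diag(0,S)$ supplies damping, while its second piece is a boundary-layer profile, so that the change of variables $x=\epsilon\xi$ gives $\|A_0\Theta_t\|_{L^2(\R^+)}^2=\epsilon\int_0^\infty|A_0\Theta_t|^2\,d\xi=O(\epsilon)$; for the same reason $\|W_0\|_{L^2(\R^+)}^2=O(\epsilon)$.

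Next I would carry out the energy estimate. Multiplying by $2W^*$ and integrating over $x>0$, with the decay of $W$ as $x\to\infty$, yields
\begin{equation*}
\frac{d}{dt}\int_0^\infty W^*A_0W\,dx=W^*(0,t)A_1W(0,t)+\frac{2}{\epsilon}\int_0^\infty W^*QW\,dx-2\int_0^\infty W^*R\,dx.
\end{equation*}
Since $S$ is symmetric negative definite under \eqref{1.2}, the middle term is at most $-\tfrac{2c}{\epsilon}\|W_v\|_{L^2}^2$. The cross term $-2\int_0^\infty W_v^*A_{12}^*\bar u_x\,dx$ from the first piece of $R$ is absorbed into half of the damping by Young's inequality, leaving an $O(\epsilon)\|\bar u_x\|_{L^2}^2=O(\epsilon)$ remainder, while the cross term from $A_0\Theta_t$ is bounded by $\delta\|W\|_{L^2}^2+O(\epsilon)$. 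This produces
\begin{equation*}
\frac{d}{dt}\|W\|_{A_0}^2+\frac{c}{\epsilon}\|W_v\|_{L^2}^2\le W^*(0,t)A_1W(0,t)+C\|W\|_{L^2}^2+C\epsilon.
\end{equation*}
The boundary term $W^*(0,t)A_1W(0,t)$ cannot be signed from the homogeneous boundary condition alone, and controlling it is the crux of the proof.

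To bound this term I would Laplace transform in $t$ (dual variable $s$, $Re\,s>0$). The transformed homogeneous equation $sA_0\hat W+A_1\hat W_x=\tfrac{1}{\epsilon}Q\hat W$ is precisely the equation in \eqref{2.6} under the identification $\xi=s$, $\eta=1/\epsilon$, so Lemma \ref{lemma2.1} applies: the dynamics is governed by $M(s,1/\epsilon)$ and the characteristic (zero) modes are recovered algebraically via $\hat V^{II}=E(s,1/\epsilon)\hat V^I$. Following \cite{GKO}, I would split $W$ into a part solving the forcing problem on the whole line, estimated by the energy method together with its trace at $x=0$, and a homogeneous corrector carrying the induced boundary data, to which the modified GKC applies. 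The decisive point is that the GKC holds uniformly for all $\eta\ge0$, hence for $\eta=1/\epsilon$ as $\epsilon\to0$, which is exactly the large-$\eta$ regime analysed in the proof of Lemma \ref{lemma2.4}; there the finite limits of $R_M^S(s,\eta)$, $N_5$ and $N_6$ are computed. The uniform lower bound on $|\det\{B(\cdots)R_M^S(s,1/\epsilon)\}|$ furnished by the GKC, together with Parseval's identity, should give $\int_0^T|W(0,t)|^2\,dt\le C\epsilon$ up to terms absorbable by the energy estimate; here assumption \eqref{2.4} is essential, as it keeps the undamped zero modes out of the boundary condition. I expect this uniform-in-$\epsilon$ trace estimate, and in particular the correct bookkeeping of the powers of $\epsilon$ across the two disparate spatial scales carried by $M(s,1/\epsilon)$, to be the main obstacle.

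Finally I would combine the two ingredients. Integrating the energy inequality over $[0,T]$, inserting the trace estimate for $\int_0^TW^*(0,t)A_1W(0,t)\,dt$, using $\|W_0\|_{L^2}^2=O(\epsilon)$, and applying Gr\"onwall's inequality to absorb the $C\|W\|_{L^2}^2$ term, I would obtain $\|W(\cdot,t)\|_{L^2(\R^+)}^2\le K^2\epsilon$ uniformly for $t\in[0,T]$, which is the asserted estimate $\|(U^\epsilon-U_\epsilon)(\cdot,t)\|_{L^2(\R^+)}\le K\epsilon^{1/2}$.
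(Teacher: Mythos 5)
Your overall strategy --- derive the error equation for $W=U^\epsilon-U_\epsilon$, split it in the manner of \cite{GKO} into a forced part handled by the energy method and a homogeneous corrector carrying the induced boundary data handled by the Laplace transform and the modified GKC --- is exactly the paper's (the decomposition \eqref{2.59}--\eqref{2.60} and Lemmas \ref{lemma2.7}--\ref{lemma2.8}). Your bookkeeping of the forcing is also right: $F_1=A_{12}^*\bar u_x$ is $O(1)$ but sits in the damped $v$-block, and $F_2=A_0\Theta_t$ has $L^2$ norm $O(\epsilon^{1/2})$ by the layer scaling. (One small point: under the compatibility hypothesis \eqref{2.54} the layer vanishes identically at $t=0$, so $W(x,0)=0$ exactly; your $O(\epsilon^{1/2})$ bound on the initial error is harmless but not needed.)

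The genuine gap is in how you control the boundary trace of the forced part. You propose to solve the forcing problem ``on the whole line'' and to estimate ``its trace at $x=0$'' by the energy method. For a whole-line solution, the energy identity integrated over $x>0$ only controls the flux $\int_0^T W_1^*(0,t)A_1W_1(0,t)\,dt$, and since $A_1$ is indefinite and degenerate this does not bound any definite part of $W_1(0,t)$ --- in particular not the quantity $BW_1(0,t)$ that must be fed to the corrector. The paper's fix is to pose the forced problem on the half-line with the auxiliary, maximally dissipative boundary condition $L_+A_0^{1/2}W_1(0,t)=0$ (which trivially satisfies the Kreiss condition), so that
$-W_1^*A_1W_1|_{x=0}=-W_1^*A_0^{1/2}L_-^*\Lambda_-L_-A_0^{1/2}W_1|_{x=0}\ge c_1|L_-A_0^{1/2}W_1(0,t)|^2$;
together with assumption \eqref{2.4}, which eliminates the uncontrolled zero modes $L_0A_0^{1/2}W_1(0,t)$ from $BW_1(0,t)$ (Step 2 of the proof of Lemma \ref{lemma2.8}), this is exactly the trace information the corrector needs. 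You also explicitly leave the uniform-in-$\epsilon$ trace estimate for the corrector as ``the main obstacle''; this is where the paper does the remaining work (Lemma \ref{lemma2.8}, Step 3): the unstable modes are killed by $L^2$-integrability in $x$, the modified GKC gives a uniform bound on $R_M^{S*}\hat V_2^I(0,\xi)$, the uniform boundedness of $E$ and $R_M^S$ (Appendix \ref{appendC}) upgrades this to a bound on the full trace, and Parseval plus a causality cut-off restricts the integral to $[0,T]$. Your sketch names the right ingredients, but these two steps --- the choice of auxiliary boundary condition and the uniform trace estimate --- are precisely where the proof lives.
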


As the first step to prove this theorem, we derive equations for the difference $W\equiv U^\epsilon-U_\epsilon$. 
From \eqref{2.32} and \eqref{2.33} we deduce that the approximate solution
\begin{align*}
U_\epsilon=\left({\begin{array}{*{20}c}
  \vspace{1.5mm}u_\epsilon \\
  v_\epsilon
\end{array}}\right) =&
\left({\begin{array}{*{20}c}
  \vspace{1.5mm}\bar{u} \\
  \bar{v} 
\end{array}}\right) +
\left({\begin{array}{*{20}c}
  \vspace{1.5mm}\mu \\
  \nu 
\end{array}}\right)
\end{align*}
satisfies
\begin{align}
&\left({\begin{array}{*{20}c}
  \vspace{1.5mm}A_{01} & 0 \\
  0 & A_{02}
  \end{array}}\right)\left({\begin{array}{*{20}c}
  \vspace{1.5mm}u_\epsilon \\
  v_\epsilon
  \end{array}}\right)_t+
\left({\begin{array}{*{20}c}
  \vspace{1.5mm}A_{11} & A_{12} \\
  A_{12}^* & A_{22}
  \end{array}}\right)
\left({\begin{array}{*{20}c}
  \vspace{1.5mm}u_\epsilon \\
  v_\epsilon
  \end{array}}\right)_x-\frac{1}{\epsilon}
\left({\begin{array}{*{20}c}
  \vspace{1.5mm}0 & 0 \\
  0 & S
  \end{array}}\right)
\left({\begin{array}{*{20}c}
  \vspace{1.5mm}u_\epsilon \\
  v_\epsilon
  \end{array}}\right)\nonumber\\[2mm]
=&\left({\begin{array}{*{20}c}
  \vspace{1.5mm}0 \\
  A_{12}^*\bar{u}_{x}
  \end{array}}\right)+
\left({\begin{array}{*{20}c}
  \vspace{1.5mm}A_{01}\mu_t  \\
  A_{02}\nu_t
\end{array}}\right)  
:= \left({\begin{array}{*{20}c}
  \vspace{1.5mm}0 \\
  F_1
\end{array}}\right)+
F_2.\label{2.55}
\end{align}
Thus the difference $W$ solves
\begin{align}\label{2.56}
A_0W_t+
A_1
W_x=\frac{1}{\epsilon}
\left({\begin{array}{*{20}c}
  \vspace{1.5mm}0 & 0 \\
  0 & S
  \end{array}}\right)W-\left({\begin{array}{*{20}c}
  \vspace{1.5mm}0 \\
  F_1
\end{array}}\right)-F_2.
\end{align}
Moreover, it follows from \eqref{2.34} and \eqref{2.2} that $W$ satisfies the boundary condition 
\begin{eqnarray}\label{2.57}
  BW(0,t)=0.  
\end{eqnarray} 
On the other hand, with $\bar{u}(x,0)=u_0(x)$ we can deduce from the compatibility assumption \eqref{2.54} and the unique solvability of the boundary-layer correction $(\mu,\nu)$ claimed preceding Lemma \ref{lemma2.4} that 
$$
\mu|_{t=0}\equiv 0,\quad \nu|_{t=0}\equiv 0.
$$
Therefore, $W$ vanishes at $t=0$:
\begin{align}\label{2.58}
W(x,0)=0. 
\end{align}
In this way, we derive an IBVP \eqref{2.56}-\eqref{2.58} for $W$.

To estimate $W$, we follow \cite{GKO} and make the following decomposition
$$
W=W_1+W_2.
$$
Here $W_1$ solves 
\begin{align}\label{2.59}
\left\{
{\begin{array}{*{20}l}
  \vspace{2mm}A_0W_{1t}+A_1W_{1x}=\dfrac{1}{\epsilon}
QW_1-\left({\begin{array}{*{20}c}
  \vspace{2mm}0 \\
  F_1
\end{array}}\right)-F_2, \\[2mm]
L_+A_0^{1/2} W_1(0,t)=0,\\[3mm]
W_1|_{t=0}=0,
\end{array}}
\right.
\end{align}
and $W_2$ satisfies
\begin{align}\label{2.60}
\left\{
{\begin{array}{*{20}l}
  \vspace{2mm}A_0W_{2t}+
A_1W_{2x}=\dfrac{1}{\epsilon}
QW_2, \\[2mm]
BW_2(0,t)=-BW_1(0,t),\\[3mm]
W_2|_{t=0}=0.
\end{array}}
\right.\qquad \qquad \qquad \quad  
\end{align}
In \eqref{2.59}, $L_+$ is an $n^+\times n$-matrix consisting of the $n^+$ left-eigenvectors of matrix $A_0^{-1/2}A_1A_0^{-1/2}$ associated with its positive eigenvalues. 
Note that $A_0^{-1/2}A_1A_0^{-1/2}$ is symmetric. Then $L_+$ can be the first 
$n^+$ rows of the following orthonormal matrix $L$ satisfying
\begin{equation}\label{2.61}
	L L^*=I_{n},\qquad LA_0^{-1/2}A_1A_0^{-1/2}L^*=\text{diag}(\Lambda_+,\Lambda_-,0).
\end{equation}
Here $\Lambda_+$ and $\Lambda_-$ are diagonal matrices whose entries are the $n^+$ positive and $(n-n^o-n^+)$ negative eigenvalues of $A_0^{-1/2}A_1A_0^{-1/2}$, respectively. Corresponding to the partition $\text{diag}(\Lambda_+,\Lambda_-,0)$, we can write 
$$
L=\left({\begin{array}{*{20}c}
  				L_+\\
                L_-\\
                L_0
  \end{array}}\right),
$$ 
where $L_-$ and $L_0$ are $(n-n^o-n^+)\times n$ and $n^o\times n$-matrices, respectively. 

For IBVPs \eqref{2.59} and \eqref{2.60}, we have the following conclusions.

\begin{lemma}\label{lemma2.7}
If $F_1,F_2\in L^2([0,T]\times R^+)$, then the IBVP \eqref{2.59} has a unique solution $W_1=W_1(x,t)$ satisfying 
\begin{align}
&\max_{t\in[0,T]}\|W_1(\cdot,t)\|_{L^2(R^+)}^2+ \int_0^T|L_-A_0^{1/2}W_1(0,t)|^2dt \nonumber\\[2mm]
\leq &\ C(T)\bigg(\epsilon\|F_1\|^2_{L^2([0,T]\times R^+)} + \|F_{2}\|^2_{L^2([0,T]\times R^+)}\bigg).\label{2.62} 
\end{align}
Here $C(T)$ is a generic constant depending only on $T$.
\end{lemma}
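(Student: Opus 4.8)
The plan is to establish well-posedness by the classical theory and then prove \eqref{2.62} by a single energy estimate that exploits the $1/\epsilon$ relaxation dissipation. Since $A_0$ is symmetric positive definite, $A_1$ is symmetric, and the boundary condition $L_+A_0^{1/2}W_1(0,t)=0$ prescribes exactly the $n^+$ incoming characteristic modes, the IBVP \eqref{2.59} is a symmetric hyperbolic system with maximally dissipative boundary conditions; existence and uniqueness of $W_1$ then follow from the standard theory \cite{BS,GKO}, and the core of the proof is the a priori estimate. I would form the energy $E(t)=\int_0^\infty W_1^*A_0W_1\,dx$, take the $L^2(R^+)$ inner product of the equation in \eqref{2.59} with $2W_1$, and use the symmetry of $A_0$ and $A_1$ to write $2W_1^*A_0W_{1t}=\tfrac{d}{dt}(W_1^*A_0W_1)$ and $2W_1^*A_1W_{1x}=\partial_x(W_1^*A_1W_1)$. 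Integrating in $x$ and discarding the flux at $x=\infty$ (legitimate for $W_1(\cdot,t)\in L^2$) yields, with $v_1$ denoting the last $r$ components of $W_1$,
\[
\frac{d}{dt}E(t)=W_1^*(0,t)A_1W_1(0,t)+\frac{2}{\epsilon}\int_0^\infty v_1^*Sv_1\,dx-2\int_0^\infty v_1^*F_1\,dx-2\int_0^\infty W_1^*F_2\,dx.
\]

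For the boundary term I would set $Y=A_0^{1/2}W_1$ and use \eqref{2.61} to diagonalize $W_1^*A_1W_1=(LY)^*\diag(\Lambda_+,\Lambda_-,0)(LY)$. The boundary condition $L_+Y(0,t)=0$ annihilates the $\Lambda_+$ block, so that
\[
W_1^*(0,t)A_1W_1(0,t)=(L_-Y(0,t))^*\Lambda_-(L_-Y(0,t))\leq -c_-|L_-A_0^{1/2}W_1(0,t)|^2,
\]
where $c_->0$ is the smallest modulus of the negative eigenvalues in $\Lambda_-$. This dissipative trace is precisely what produces the boundary integral on the left-hand side of \eqref{2.62}.

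The decisive step is the treatment of the two forcing terms, which is the source of the asymmetric $\epsilon$-weighting. Since $S$ is symmetric negative definite, $\frac{2}{\epsilon}\int v_1^*Sv_1\,dx\leq-\frac{2c_S}{\epsilon}\int|v_1|^2\,dx$ supplies $O(1/\epsilon)$ dissipation in the relaxing component $v_1$. Because $F_1$ couples only to $v_1$, I would apply Young's inequality as $-2\int v_1^*F_1\,dx\leq\frac{c_S}{\epsilon}\int|v_1|^2\,dx+\frac{\epsilon}{c_S}\int|F_1|^2\,dx$: the first term is absorbed by the relaxation dissipation, and the survivor carries a factor $\epsilon$, which is exactly the gain recorded on $\|F_1\|^2$ in \eqref{2.62}. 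By contrast $F_2$ forces all components, so I would only use $-2\int W_1^*F_2\,dx\leq E(t)+C\|F_2\|_{L^2(R^+)}^2$ via the positive definiteness of $A_0$, giving the unweighted $\|F_2\|^2$. Collecting everything yields the differential inequality
\[
\frac{d}{dt}E(t)+c_-|L_-A_0^{1/2}W_1(0,t)|^2\leq C_1E(t)+\frac{\epsilon}{c_S}\|F_1(\cdot,t)\|_{L^2(R^+)}^2+C_2\|F_2(\cdot,t)\|_{L^2(R^+)}^2,
\]
after which integrating in $t$ from $0$, using $E(0)=0$ from $W_1|_{t=0}=0$, and applying Gronwall's inequality would bound $E(t)$ and the time-integral of the boundary trace simultaneously, with $C(T)\sim e^{C_1T}$; taking the maximum over $t\in[0,T]$ finishes \eqref{2.62}.

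The main obstacle I anticipate is keeping the $\epsilon$-dependence honest: the full power of $\epsilon$ on $\|F_1\|^2$ rests entirely on the structural fact that $F_1$ drives only the non-equilibrium component $v_1$, so that the $1/\epsilon$ relaxation dissipation can absorb it — any leakage of the forcing into the equilibrium component $u$ would destroy this gain and leave only $\|F_1\|^2$. A secondary technical point is justifying the vanishing flux at $x=\infty$ and carrying out the integrations by parts at the level of genuine (rather than merely $L^2$) solutions; I would handle this by first proving the estimate for smooth, rapidly decaying data and then passing to the limit by density, in accordance with the well-posedness theory invoked at the outset.
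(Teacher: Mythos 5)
Your proposal is correct and follows essentially the same route as the paper: existence via the classical theory for the characteristic boundary (the paper verifies the Kreiss condition for $L_+A_0^{1/2}$ and invokes Majda--Osher), then the energy identity for $\int W_1^*A_0W_1\,dx$, the sign of the boundary flux extracted from the diagonalization \eqref{2.61} together with $L_+A_0^{1/2}W_1(0,t)=0$, absorption of the $F_1$ term into the $O(1/\epsilon)$ relaxation dissipation (which is exactly where the factor $\epsilon$ on $\|F_1\|^2$ comes from), a plain Young inequality for $F_2$, and Gronwall. No substantive differences.
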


\begin{lemma}\label{lemma2.8}
The IBVP \eqref{2.60} has a unique solution $W_2=W_2(x,t)$ satisfying 
\begin{align}
&\max_{t\in[0,T]}\|W_2(\cdot,t)\|_{L^2(R^+)}^2+\int_0^T|W_2(0,t)|^2dt \leq   C(T) \int_0^T|L_-A_0^{1/2}W_1(0,t)|^2 dt . \label{2.63}
\end{align}
\end{lemma}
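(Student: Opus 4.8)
The plan is to transplant the GKO decomposition philosophy to this characteristic setting, using the two-step pattern ``Laplace transform for the boundary trace, symmetric-hyperbolic energy identity for the interior norm,'' and to glue the two together with the modified GKC. Before either estimate, I would first reduce the boundary data $-BW_1(0,t)$ to the quantity $L_-A_0^{1/2}W_1(0,t)$ that appears on the right of \eqref{2.63}. Writing $L$ in the block form following \eqref{2.61} and using $L^*L=I_n$ together with the homogeneous condition $L_+A_0^{1/2}W_1(0,t)=0$ from \eqref{2.59}, one gets
\[
A_0^{1/2}W_1(0,t)=L_-^*\big(L_-A_0^{1/2}W_1(0,t)\big)+L_0^*\big(L_0A_0^{1/2}W_1(0,t)\big).
\]
The columns of $A_0^{-1/2}L_0^*$ are null vectors of $A_0^{-1}A_1$, hence span the same space as the eigenvectors $R_A^0$ in \eqref{2.4}; therefore $BA_0^{-1/2}L_0^*=0$ and the zero-mode contribution to $BW_1(0,t)$ drops out. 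This yields $|BW_1(0,t)|\le C\,|L_-A_0^{1/2}W_1(0,t)|$, so it suffices to establish \eqref{2.63} with the right-hand side replaced by $\int_0^T|BW_1(0,t)|^2\,dt$.

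For the boundary-trace term I would Laplace-transform \eqref{2.60} in $t$. Since $W_2|_{t=0}=0$, the transform $\hat W_2(x,s)$ solves $sA_0\hat W_2+A_1\hat W_{2x}=\epsilon^{-1}Q\hat W_2$ with $B\hat W_2(0,s)=-B\hat W_1(0,s)=:\hat g(s)$, which is precisely the resolvent equation \eqref{2.6} with $\xi=s$, $\eta=1/\epsilon$ (equivalently $\xi=\epsilon s$, $\eta=1$ after rescaling $x\mapsto x/\epsilon$). By Lemma \ref{lemma2.1} the bounded solution is governed by $\hat V^I_x=M(\xi,\eta)\hat V^I$, and Lemma \ref{lemma2.3} places its trace in the $n^+$-dimensional stable subspace, so $\hat V^I(0)=R_M^S(\xi,\eta)\sigma$. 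The boundary condition then becomes exactly the $n^+\times n^+$ system built from the matrix in \eqref{2.31} acting on $\sigma$, whose invertibility and quantitative lower bound are supplied by the modified GKC. Solving for $\sigma$ and reconstructing $\hat W_2(0,s)$ from $\hat V^I(0)=R_M^S\sigma$ via the relations of Lemma \ref{lemma2.1} gives a pointwise resolvent estimate $|\hat W_2(0,s)|\le C\,|\hat g(s)|$. A Plancherel argument along the line $Re\,s=\gamma$ converts this into $\int_0^T|W_2(0,t)|^2\,dt\le C(T)\int_0^T|BW_1(0,t)|^2\,dt$.

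For the interior norm I would use the energy identity. Multiplying \eqref{2.60} by $W_2^*$, adding the conjugate, and integrating in $x$ gives
\[
\frac{d}{dt}\int_0^\infty W_2^*A_0W_2\,dx = W_2(0,t)^*A_1W_2(0,t)+\frac{2}{\epsilon}\int_0^\infty W_2^*QW_2\,dx.
\]
Because $Q=\mathrm{diag}(0,S)$ with $S$ negative definite, the last term is $\le 0$, while the boundary flux is bounded by $C\,|W_2(0,t)|^2$. Integrating in $t$ with $W_2|_{t=0}=0$ and using $A_0>0$ yields $\max_t\|W_2(\cdot,t)\|^2\le C\int_0^T|W_2(0,t)|^2\,dt$, which feeds on the trace bound of the previous paragraph. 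Chaining the three steps produces \eqref{2.63}; existence and uniqueness of $W_2$ follow from the standard theory for symmetric hyperbolic IBVPs with characteristic boundaries under the GKC and \eqref{2.4}.

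The hard part will be the \emph{uniformity} of the resolvent estimate $|\hat W_2(0,s)|\le C|\hat g(s)|$ in both $\epsilon$ and the Laplace frequency: as $\epsilon\to0$ one has $\eta=1/\epsilon\to\infty$, and $|Im\,s|$ is unbounded, so $M$, $R_M^S$, $E$, $N_5$ and $N_6$ must be controlled in these limits. I expect to dispatch this exactly as in \cite{Y2}: the homogeneity $M(c\xi,c\eta)=cM(\xi,\eta)$ makes the stable subspace depend only on the direction of $(\xi,\eta)$, reducing the verification to a compact set on which the $\sqrt{\det\{R_M^{S*}R_M^S\}}$-normalization in the modified GKC yields a uniform bound. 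A secondary subtlety, genuinely due to the characteristic boundary, is that the full trace $\hat W_2(0,s)$ must be reconstructed including the $n^o$ zero-eigenvalue components $\hat V^{II}=E\hat V^I$; this is why \eqref{2.4} and the modified GKC, rather than the indefinite energy flux alone, are needed to control the entire trace.
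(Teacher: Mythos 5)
Your proposal is correct and follows essentially the same route as the paper: the energy identity bounds $\max_t\|W_2\|^2$ by the boundary trace, the identity $I_n=A_0^{-1/2}(L_+^*L_++L_-^*L_-+L_0^*L_0)A_0^{1/2}$ together with \eqref{2.4} reduces $-BW_1(0,t)$ to $L_-A_0^{1/2}W_1(0,t)$, and the Laplace transform combined with Lemma \ref{lemma2.1}, the stable-subspace reduction, the modified GKC, and the uniform bounds on $E$ and $R_M^S$ (the paper's Appendix C) gives the resolvent estimate $|\hat W_2(0,\cdot)|\le C|\hat g(\cdot)|$, closed by Parseval and the causality truncation to $[0,T]$. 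The only detail you gloss over is that truncation step, which the paper carries out explicitly via the modified data $\tilde g$.
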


Assuming these two lemmas for the moment, we present \\

\textbf{A Proof of Theorem \ref{thm2.6}:}
By Theorem \ref{thm2.5}, we know that the equilibrium system \eqref{2.32} with the boundary condition \eqref{2.49} and initial condition $\bar{u}(x,0)=u_0(x)$ constitutes a well-posed IBVP. 
According to the classical theory for hyperbolic IBVPs (see, e.g., \cite{BS}), the well-posed IBVP has a unique solution $\bar{u}\in C([0,T];H^1(R^+))$. Then we have 
\begin{gather*}
\|F_1\|^2_{L^2([0,T]\times R^+)} \equiv \|A_{12}^*\bar{u}_x\|^2_{L^2([0,T]\times R^+)}  \leq C(T) .
\end{gather*}
Moreover, we have 
\begin{align*}
\|F_{2}\|^2_{L^2([0,T]\times R^+)}\equiv &  \int_{R^+}\int_0^T\left[\left|A_{01}\mu_{t}(\frac{x}{\epsilon},t)\right|^2+\left|A_{02}\nu_{t}(\frac{x}{\epsilon},t)\right|^2\right]dtdx \\[2mm]
=& \epsilon \int_{R^+}\int_0^T \left[|A_{01}\mu_{t}(y,t)|^2+|A_{02}\nu_{t}(y,t)|^2\right]dtdy  
\leq C(T) \epsilon.
\end{align*}
With these, Theorem \ref{thm2.6} immediately follows from a simple linear combination of the two estimates in \eqref{2.62} and \eqref{2.63}. Hence Theorem \ref{thm2.6} is proved.\\

It remains to prove Lemmas \ref{lemma2.7} and \ref{lemma2.8}.

\begin{proof}(Lemma \ref{lemma2.7})
We firstly claim that the boundary condition in \eqref{2.59} satisfies the Kreiss condition. In fact, since $A_0^{-1/2}A_1A_0^{-1/2}L_+^*=L_+^*\Lambda_+$ due to \eqref{2.61}, $A_0^{-1/2}L_+^*$ is a right-unstable matrix of $A_0^{-1}A_1$. For this right-unstable matrix, we have
$$
\det\{(L_+A_0^{1/2})(A_0^{-1/2}L_+^*)\} = 1,
$$
namely, the Kreiss condition is satisfied. Thus the existence follows from the existence theory (Theorem 1.12 in \cite{MO}) for hyperbolic IBVPs with uniformly characteristic boundaries.

For the estimate \eqref{2.62}, we multiply the differential equation in \eqref{2.59} with  
$W_1^*$ from the left to obtain
\begin{align*}
W_1^*A_0W_{1t}+W_1^*A_1W_{1x}=&\dfrac{1}{\epsilon}
W_1^*\left({\begin{array}{*{20}c}
  \vspace{1.5mm}0 & 0 \\
  0 & S
  \end{array}}\right)W_1-W_1^{II*}F_1-W_1^*F_{2}.
\end{align*}
Here $W_1^{II}$ represents the last $r$ components of $W_1$. From this equation, we can easily derive 
\begin{align*}
\frac{d(W_1^*A_0W_1)}{dt}+(W_1^*A_1W_1)_x
\leq & -\frac{c_0}{\epsilon}|W_1^{II}|^2+ 2|ReW_1^{II*}F_1| + 2|ReW_1^*F_{2}|\\[2mm]
\leq & -\frac{c_0}{2\epsilon}|W_1^{II}|^2+  \frac{2\epsilon}{c_0}|F_1|^2 + |W_1|^2+|F_{2}|^2
\end{align*}
with $c_0>0$ a constant, since $A_0$ and $A_1$ are symmetric and $S$ is negative definite.
Integrating the last inequality over $x\in[0,\infty)$ yields
\begin{align}\label{2.64}
&\frac{d}{dt}\left(\int_{R^+}W_1^*A_0W_1dx \right) - W_1^*(0,t)A_1W_1(0,t)  
\leq  \frac{2\epsilon}{c_0} \|F_1\|^2_{L^2( R^+)} + \|W_1\|^2_{L^2( R^+)}+\|F_{2}\|^2_{L^2( R^+)}. 
\end{align}

Thanks to the boundary condition $L_+A_0^{1/2}W_1(0,t)=0$, it follows from \eqref{2.61} that
\begin{align}
-W_1^*(0,t)A_1W_1(0,t)=&-W_1^*(0,t)A_0^{1/2}(A_0^{-1/2}A_1A_0^{-1/2})A_0^{1/2}W_1(0,t)\nonumber \\[2mm]
=& -W_1^*(0,t)A_0^{1/2}(L_+^*\Lambda_+L_++L_-^*\Lambda_-L_-)A_0^{1/2}W_1(0,t)\nonumber\\[2mm]
                 =& -W_1^*(0,t)A_0^{1/2} L_-^*\Lambda_-L_- A_0^{1/2}W_1(0,t)\nonumber\\[2mm]
                 \geq & \ c_1|L_-A_0^{1/2}W_1(0,t)|^2 \label{2.65}
\end{align}
with $c_1>0$ a constant.
Since $W_1|_{t=0}=0$ and
$$
C^{-1} \|W_1\|^2_{L^2(R^+)} \leq \int_{R^+}W_1^*A_0W_1dx\leq C \|W_1\|^2_{L^2(R^+)} 
$$
with $C$ a generic constant,
we can use Gronwall's inequality in \eqref{2.64} to get
\begin{align*}
 \max_{t\in[0,T]}\|W_1(t)\|_{L^2( R^+)}^2  
\leq & ~ Ce^{CT}\bigg( \epsilon\|F_1\|^2_{L^2([0,T] \times R^+)}+\|F_{2}\|^2_{L^2([0,T] \times R^+)}   \bigg). 
\end{align*}
Having this and \eqref{2.65}, we integrate \eqref{2.64} to obtain
\begin{align*} 
\int_0^T|L_-A_0^{1/2}W_1(0,t)|^2dt\leq C(T)\bigg( \epsilon\|F_1\|^2_{L^2([0,T] \times R^+)}+\|F_{2}\|^2_{L^2([0,T] \times R^+)}\bigg).
\end{align*}
The last two inequalities together give \eqref{2.62} and the proof is complete.
\end{proof}

Finally, we present a proof of Lemma \ref{lemma2.8}.

\begin{proof}(Lemma \ref{lemma2.8})
Since the modified GKC implies the standard Kreiss condition (see Remark \ref{remark2.1}), the IBVP \eqref{2.60} has a unique $L^2([0,T]\times R^+)$-solution $W_2=W_2(x,t)$ by the aforementioned existence theory in \cite{MO}. 

The estimate \eqref{2.63} can be derived with the following three steps.

\textbf{Step 1:} 
Multiplying the equation in \eqref{2.60} with $W_2^*$ from the left yields
\begin{align*}
\frac{d}{dt}(W_2^*A_0W_2)+(W_2^*A_1W_{2})_x= \frac{2}{\epsilon}W_2^*QW_2\leq 0.
\end{align*}
Notice that the initial value of $W_2$ is zero. We integrate the last inequality over $x\in [0,+\infty)$ and $t\in [0,T]$ to obtain
\begin{align*}
\int_{R^+}W_2^*(x,t)A_0W_2(x,t) dx\leq & \int_0^TW_2^*(0,t)A_1W_2(0,t)dt\leq C\int_0^T|W_2(0,t)|^2dt,\qquad \forall\ t\in[0,T].
\end{align*}
Since $A_0$ is a positive definite matrix, we have 
\begin{equation*} 
  \|W_2(\cdot,t)\|^2_{L^2(R^+)}\leq C\int_0^{T} |W_2(0,t)|^2dt,\qquad \forall\ t\in[0,T]. 
\end{equation*}
With this, it remains to bound $\int_0^{T} |W_2(0,t)|^2dt$ in terms of $\int_0^T|L_-A_0^{1/2}W_1(0,t)|^2 dt$.

\textbf{Step 2:} As a preparation for Step 3, we show that $g(t):=-BW_1(0,t)$ can be expressed in terms of $L_-A_0^{1/2}W_1(0,t)$.
Recall that $L$ is an orthonormal matrix in \eqref{2.61}. Then we have
\begin{align*}
I_n=&\left({\begin{array}{*{20}c}
          \vspace{1.5mm}L_+A_0^{1/2}\\
                \vspace{1.5mm}L_-A_0^{1/2}\\
                L_0A_0^{1/2}
  \end{array}}\right)(A_0^{-1/2}L_+^*,~A_0^{-1/2}L_-^*,~A_0^{-1/2}L_0^*)\\[2mm]
  =&A_0^{-1/2}L_+^*L_+A_0^{1/2} + A_0^{-1/2}L_-^*L_-A_0^{1/2} + A_0^{-1/2}L_0^*L_0A_0^{1/2}. 
\end{align*}
Using this and the boundary condition in \eqref{2.59}, we rewrite $g(t)$ as 
\begin{align*}
g(t)=&-BA_0^{-1/2}L_+^*L_+A_0^{1/2}W_1(0,t)-BA_0^{-1/2}L_-^*L_-A_0^{1/2}W_1(0,t)-BA_0^{-1/2}L_0^*L_0A_0^{1/2}W_1(0,t)\\[2mm]
=&-BA_0^{-1/2}L_-^*L_-A_0^{1/2}W_1(0,t)-BA_0^{-1/2}L_0^*L_0A_0^{1/2}W_1(0,t).
\end{align*}
On the other hand, from \eqref{2.61} we see that $A_0^{-1/2}A_1A_0^{-1/2}L_0^*=0$. This means that each column of $A_0^{-1/2}L_0^*$ is an eigenvector of $A_0^{-1}A_1$ associate with the zero eigenvalue. By the assumption \eqref{2.4}, we have
$$
BA_0^{-1/2}L_0^*=0.
$$
Consequently, we obtain
\begin{align}\label{2.75}
g(t)=&-BA_0^{-1/2}L_-^*L_-A_0^{1/2}W_1(0,t).
\end{align}

\textbf{Step 3:} 
For the $L^2$-solution $W_2=W_2(x,t)$, define its Laplace transform with respect to $t$:
$$
\hat{W_2}(x,\xi)=\int_0^{\infty}e^{-\xi t}W_2(x,t)dt,\qquad Re \xi>0.
$$
Then we deduce from the IBVP \eqref{2.60} that 
\begin{align}\label{2.68}
\left\{
{\begin{array}{*{20}l}
\vspace{2mm}A_1\hat{W}_{2x}=(\eta Q-\xi A_0)\hat{W}_2,\\
\vspace{2mm} B\hat{W}_2(0,\xi)=\hat{g}(\xi),\\
\|\hat{W}_2(\cdot,\xi)\|_{L^2}<\infty~~ \text{for ~a.e.} ~~\xi.
\end{array}}
\right.
\end{align}
Here we use the notation $\eta=1/\epsilon$. 

Observe that the first line in \eqref{2.68} is just the equation in \eqref{2.6}. Then Lemma \ref{lemma2.1} applies and gives 
\begin{eqnarray}\label{2.69}
\left\{{\begin{array}{*{20}l}
\vspace{1.5mm} \hat{V}^I_{2x}=sM(\xi',\eta')\hat{V}_2^I,\\[3mm]
\hat{V}_2^{II}=E(\xi',\eta')\hat{V}_2^I.
  \end{array}}\right.
\end{eqnarray}
Here $s=\sqrt{\eta^2+|\xi|^2}$, $\xi'=\xi/s$ and $\eta'=\eta/s$, $\hat{V}_2^I$ denotes the first $(n-n^o)$ components of $\Phi\hat{W}_2$, and $\hat{V}_2^I$ denotes the other $n^o$ components of $\Phi\hat{W}_2$.
In terms of $\hat{V}_2= \Phi\hat{W}_2$, the boundary condition in \eqref{2.68} becomes (see \eqref{2.28}-\eqref{2.30})
\begin{align}\label{2.70}
(B_u,B_v)\left({\begin{array}{*{20}c}
  \vspace{1.5mm}I_{n-r} & 0\\
  N_6(\xi',\eta') & N_5(\xi',\eta')
  \end{array}}\right)\hat{V}_2^I(0,\xi)=\hat{g}(\xi).
\end{align}

Let $R_M^S=R_M^S(\xi',\eta')$ and $R_M^U=R_M^U(\xi',\eta')$ be the right-stable and right-unstable matrices of $M=M(\xi',\eta')$:
\begin{align*}
MR_M^S=R_M^SM^S,\qquad\qquad 
MR_M^U=R_M^UM^U
\end{align*}
with $M^S$ a stable-matrix and $M^U$ an unstable-matrix. In view of the Schur decomposition, we may assume
$$
(R_M^S,\ R_M^U)\left({\begin{array}{*{20}c}
  \vspace{1.5mm} R_M^{S*} \\
                 R_M^{U*}
\end{array}}\right)=\left({\begin{array}{*{20}c}
  \vspace{1.5mm} R_M^{S*} \\
                 R_M^{U*}
\end{array}}\right)(R_M^S,\ R_M^U)=I_{n-n^o}.
$$
Thus, we deduce from \eqref{2.69} that
\begin{align*}
\left({\begin{array}{*{20}c}
  \vspace{1.5mm} R_M^{S*} \\
                 R_M^{U*}
\end{array}}\right)\hat{V}^I_{2x}=s
  \left({\begin{array}{*{20}c}
  \vspace{1.5mm}M^S & \\
                    & M^U
\end{array}}\right)\left({\begin{array}{*{20}c}
  \vspace{1.5mm} R_M^{S*} \\
                 R_M^{U*}
\end{array}}\right)\hat{V}_2^I.
\end{align*}
Since $\|\hat{V_2}(\cdot,\xi)\|_{L^2}=\|\Phi\hat{W_2}(\cdot,\xi)\|_{L^2}<\infty$ for a.e. $\xi$ and $M^U$ is an unstable-matrix, it must be
\begin{equation}\label{2.71}
   R_M^{U*}\hat{V}_2^I=0.
\end{equation}
Therefore, the boundary condition \eqref{2.70} becomes
\begin{align*}
(B_u,B_v)\left({\begin{array}{*{20}c}
  \vspace{1.5mm}I_{n-r} & 0\\
  N_6(\xi',\eta') & N_5(\xi',\eta')
  \end{array}}\right)R_M^{S}R_M^{S*}\hat{V}_2^I(0,\xi)=\hat{g}(\xi).
\end{align*}
The last equation leads to 
\begin{equation}\label{2.72}
  |R_M^{S*}\hat{V}_2^I(0,\xi)|^2\leq C|\hat{g}(\xi)|^2,
\end{equation}
since the matrix 
$$
\bigg[(B_u,B_v)\left({\begin{array}{*{20}c}
  \vspace{1.5mm}I_{n-r} & 0\\
  N_6(\xi',\eta') & N_5(\xi',\eta')
  \end{array}}\right)R_M^{S}\bigg]^{-1}
$$ 
is uniformly bounded due to the modified GKC. 

It is shown in Appendix \ref{appendC} that $E(\xi',\eta')$ and $R_M^S(\xi',\eta')$ are uniformly bounded for $|\xi'|\leq 1$ and $0 \leq \eta'\leq 1$. Then we deduce from \eqref{2.69}, \eqref{2.71} and \eqref{2.72} that 
\begin{align*}
|\hat{W}_2(0,\xi)|^2 =&|\Phi^*\hat{V}_2(0,\xi)|^2
\leq C|\hat{V}_2^I(0,\xi)|^2 
= C|R_M^SR_M^{S*}\hat{V}_2^I(0,\xi)|^2 
\leq  C|\hat{g}(\xi)|^2.
\end{align*}
Applying Parseval's identity to this inequality yields
\begin{align}\label{2.73}
\int_0^{\infty}e^{-2t Re\xi }|W_2(0,t)|^2dt \leq & C \int_0^{\infty}e^{-2t Re\xi }|g(t)|^2dt\leq C \int_0^{\infty} |g(t)|^2dt ,\qquad Re\xi>0.
\end{align}
Because the right-hand side is independent of $Re\xi$, we have 
$$
\int_0^{\infty} |W_2(0,t)|^2dt \leq C \int_0^{\infty} |g(t)|^2dt.
$$

By using the trick from \cite{GKO}, the integral interval $[0,\infty)$ in the last inequality can be easily changed to $[0,T]$. Indeed, according to the classical theory of first-order hyperbolic system, the solution at any finite time $t = T$ does not depend on the boundary data $g(t)$
for $t > T$. Therefore, with the following boundary data  
$$
\tilde{g}(t)=\left\{{\begin{array}{*{20}l}
              g(t),\qquad &t\in[0,T], \\[2mm]
                 0,\qquad &t>T,
\end{array}}\right.
$$
the new solution $\tilde{W}_2=\tilde{W}_2(x,t)$ satisfies  
$$
\int_0^{\infty} |\tilde{W}_2(0,t)|^2dt \leq C \int_0^{\infty} |\tilde{g}(t)|^2dt.
$$
Since $W_2=\tilde{W}_2$ for $t \leq T$, we have
\begin{align*} 
\int_0^{T} |W_2(0,t)|^2dt \leq \int_0^{\infty} |\tilde{W}_2(0,t)|^2dt \leq C\int_0^{\infty} |\tilde{g}(t)|^2dt=C\int_0^{T} |g(t)|^2dt. 
\end{align*}
This together with \eqref{2.75} gives 
$$
\int_0^{T} |W_2(0,t)|^2dt\leq C\int_0^{T} |g(t)|^2dt  \leq C\int_0^T|L_-A_0^{1/2}W_1(0,t)|^2 dt 
$$
and hence the proof is complete.
\end{proof}

\section{One-dimensional nonlinear problems}\label{section3}

\subsection{Assumptions}\label{subsection3.1}
Now we consider the one-dimensional nonlinear system
\begin{eqnarray}\label{3.1}
U_t+F(U)_x=Q(U)/\epsilon 
\end{eqnarray}
in $0\leq x,t<\infty$ with the boundary condition 
\begin{equation}\label{3.2}
	B(U(0,t);t)=0.
\end{equation}
Here 
$$
U=\left({\begin{array}{*{20}c}
\vspace{1.2mm} u\\[1mm] 
               v
  \end{array}}\right),\qquad F(U)=\left({\begin{array}{*{20}c}
\vspace{1.2mm} f(u,v)\\[1mm] 
               g(u,v)
  \end{array}}\right),\qquad Q(U)=\left({\begin{array}{*{20}c}
\vspace{1.2mm} 0\\ 
               q(u,v)
  \end{array}}\right),
$$
 where $u\in R^{n-r}$ and $v\in R^r$ are unknowns, $f=f(u,v)\in R^{n-r}$, $g=g(u,v)\in R^r$ and $q=q(u,v)\in R^r$ are given smooth solutions of $(u,v)\in G$. 
Under the modified GKC, we show the existence of boundary-layers for this nonlinear system and derive its reduced boundary condition. We will only consider smooth solutions near the corner $(x,t)=(0,0)$.

For the nonlinear problem above, the first set of assumptions is the following analogue of those in Subsection \ref{subsection2.1} for linear problems.
\begin{assumption}\label{asp3.1}
\ 
\begin{enumerate}
\item[\rm{(I)}]
The equation \eqref{3.1} satisfies the structural stability condition in the end of Introduction and the relation \eqref{1.2} holds.
Moreover, $q_v=q_v(u,v)$ is invertible for $(u,v)\in G$ and $q(u,v) = 0$ uniquely determines $v$ in term of $u$, say $v = h(u)$.
\item[\rm{(II)}]
The initial data lie in equilibrium and are 
consistent with the boundary condition:
\begin{equation*}
	Q(U(x,0))=0\quad \text{and}\quad B(U(0,0);0)=0.
\end{equation*}
\item[\rm{(III)}]
The boundary $x=0$ is non-characteristic for the equilibrium system \eqref{3.8} below and is characteristic for the relaxation system. Namely, the matrix $f_u+f_vh_u$ is invertible and $F_U(U)$ has zero eigenvalues. The multiplicity $n^o$ of the zero eigenvalue is assumed to be independent of $U\in G$.
\end{enumerate}
\end{assumption}

On Assumption (I) above, we comment as follows. Since $q_v$ is invertible, the matrix $P(U)$ in the structural stability condition can be defined for $U$ in the whole $G$, instead of only in the equilibrium manifold $\mathcal{E}$:  
\begin{align}\label{3.3}
P(U):=\left({\begin{array}{*{20}c}
  \vspace{1.5mm}I_{n-r} & 0\\
                q_v^{-1}q_u & I_r
  \end{array}}\right)=\left({\begin{array}{*{20}c}
  \vspace{1.5mm}I_{n-r} & 0\\
                -h_u & I_r
  \end{array}}\right)\quad \text{for} ~U\in G.
\end{align}
A simple computation shows
\begin{equation*} 
	P(U)Q_U(U)P^{-1}(U)=P(U)\left({\begin{array}{*{20}c}
  \vspace{1.5mm}0 & 0\\
               q_u & q_v
  \end{array}}\right)P^{-1}(U)=\left({\begin{array}{*{20}c}
  \vspace{1.5mm}0 & 0\\
                0 & q_v
  \end{array}}\right)\quad \text{for}~ U\in G.
\end{equation*}
 The structural stability condition implies that there exists a positive definite matrix $A_0(U)$ such that
\begin{eqnarray*} 
A_0(U)F_U(U)=F_U^*(U)A_0(U)\quad \text{for} \ U \in G.
\end{eqnarray*}
Thus we can define, for $U\in G$, 
\begin{eqnarray}\label{3.4}
\left\{{\begin{array}{*{20}l}
    \tilde{A}_0(U)\equiv \left({\begin{array}{*{20}c}
  \vspace{1.5mm}A_{01} & A_{012}\\
                A_{012}^* & A_{02}
  \end{array}}\right)(U):=P^{-*}(U)A_0(U)P^{-1}(U),\\[7mm]
    A_1(U)\equiv \left({\begin{array}{*{20}c}
  \vspace{1.5mm}A_{11} & A_{12}\\
                A_{12}^* & A_{22}
  \end{array}}\right)(U):=P^{-*}(U)A_0(U)F_U(U)P^{-1}(U).
  \end{array}}\right. 
\end{eqnarray}
On the other hand, now the equilibrium manifold can be expressed as $\mathcal{E}=\{U\in G: v=h(u)\}$.
By Theorem 2.2 in \cite{Y1}, the block-matrix $A_{012}(U)$ vanishes at $U\in \mathcal{E}$. 
Furthermore, it follows from the stability condition (iii) and the relation \eqref{1.2} that the following matrix is symmetric negative definite for $U\in \mathcal{E}$: 
\begin{align*}
P^{-*}(U)A_0(U)Q_U(U)P^{-1}(U)=\tilde{A}_0(U)P(U)Q_U(U)P^{-1}(U)=&\left({\begin{array}{*{20}c}
\vspace{1.5mm} 0 & \\
                 & A_{02}q_v
  \end{array}}\right)(U).
\end{align*}

Next, we turn to the modified GKC by considering
\begin{eqnarray}\label{3.5}
\left\{
{\begin{array}{*{20}l}
\vspace{1.5mm}
 U_t+F_U(U_0)U_x=Q_U(U_0)U/\epsilon, \\[2mm]
BU=b(t).
  \end{array}}\right.
\end{eqnarray}
This can be viewed as a linearization of \eqref{3.1} with \eqref{3.2} at the corner $(U,t)=(U_0,0)$ when $U_0 = U(0,0)$,
$B=B_U(U_0;0)$, and $b(t)=B_U(U_0;0)U_0-t \partial_t B(U_0;0)$. 
Multiplying \eqref{3.5} with $P^{-*}(U_0)A_0(U_0)$ from the left and setting $V=P(U_0)U$, we obtain 
\begin{eqnarray}\label{3.6}
\left\{
{\begin{array}{*{20}l}
\vspace{2mm}
\left({\begin{array}{*{20}c}
\vspace{1.5mm} A_{01}(U_0) & \\
                 & A_{02}(U_0)
  \end{array}}\right)V_t+A_1(U_0)V_x=\dfrac{1}{\epsilon}\left({\begin{array}{*{20}c}
\vspace{1.5mm} 0 & \\
                 & A_{02}q_v(U_0)
  \end{array}}\right)V, \\[5mm]
BP^{-1}(U_0)V(0,t)=b(t).
  \end{array}}\right.
\end{eqnarray}
This has the form of problem \eqref{2.1} in Section \ref{section2}. Our last assumption reads as

\begin{assumption}\label{asp3.2}
The modified GKC holds for the linearized problem \eqref{3.6}.
\end{assumption}

Under this assumption, Lemma \ref{lemma2.4} holds. Since now the boundary matrix in \eqref{3.6} is $BP^{-1}(U_0)=(B_u+B_vh_u, B_v)(U_0)$, the lemma can be stated as that the matrix 
\begin{equation}\label{3.7} 
	\left({\begin{array}{*{20}c}
  \vspace{2mm} (B_u+B_vh_u)R_1^U & (B_v-(B_u+B_vh_u)A_{11}^{-1}A_{12})G_1\\
                  0 &  L_2^U
  \end{array}}\right)(U_0)
\end{equation}
is invertible. 
Here $R_1^U$ is a right-unstable matrix of $A_{01}^{-1}A_{11}(U_0)$, $L_2^U$ is a left-unstable matrix of $\Lambda_2^{-1}G_2$ with $\Lambda_2$ representing the nonzero eigenvalues of $(A_{22}-A_{12}^*A_{11}^{-1}A_{12})(U_0)$. The matrices $G_1$ and $G_2$ are defined by \eqref{2.38} and \eqref{2.39} with $S=A_{02}q_v(U_0)$.

\subsection{Existence of boundary-layers}\label{subsection3.2}
Following \cite{Y2}, we consider the Ansatz 
\begin{align*}
\left({\begin{array}{*{20}c}
  \vspace{1.5mm}u_\epsilon\\
                v_\epsilon
  \end{array}}\right)(x,t)=\left({\begin{array}{*{20}c}
  \vspace{1.5mm}\bar{u}\\
                \bar{v}
  \end{array}}\right)(x,t)+\left({\begin{array}{*{20}c}
  \vspace{1.5mm}\tilde{u}\\
                \tilde{v}
  \end{array}}\right)(\xi,t)-\left({\begin{array}{*{20}c}
  \vspace{1.5mm}\bar{u}\\
                \bar{v}
  \end{array}}\right)(0,t)
\end{align*}
with $\xi=x/\epsilon$.
The outer solution $(\bar{u},\bar{v})$ satisfies the equilibrium system
\begin{eqnarray}\label{3.8}
\left\{
{\begin{array}{*{20}l}
\vspace{1.2mm}\bar{u}_t+f(\bar{u},h(\bar{u}))_x=0, \\[2mm]
\bar{v}=h(\bar{u}),
  \end{array}}\right.
\end{eqnarray}
while the inner solution $(\tilde{u},\tilde{v})$ solves
\begin{eqnarray}\label{3.9}
\left\{
{\begin{array}{*{20}l}
\vspace{1.2mm}f(\tilde{u},\tilde{v})_{\xi}=0, \\[2mm]
g(\tilde{u},\tilde{v})_{\xi}=q(\tilde{u},\tilde{v}).
  \end{array}}
 \right.
\end{eqnarray}
Based on the matching principle, it was assumed that $(\bar{u}(0,t),\bar{v}(0,t))=(\tilde{u}(\infty,t),\tilde{v}(\infty,t))$.

From the first equation in \eqref{3.9} it follows that 
\begin{equation*}
  f(\tilde{u}(\xi,t),\tilde{v}(\xi,t))=f(\tilde{u}(\infty,t),\tilde{v}(\infty,t)),\quad \forall ~\xi.
\end{equation*}
Set $\tilde{u}_{\infty}=\tilde{u}(\infty,t)$, $\mu:=\tilde{u}(\xi,t)-\tilde{u}_{\infty}$ and $\nu:=\tilde{v}(\xi,t)-h(\mu+\tilde{u}_{\infty})$. Then we have 
\begin{equation}\label{3.10}
  \varphi_1(\mu,\nu,\tilde{u}_{\infty}):=f(\mu+\tilde{u}_{\infty},\nu+h(\mu+\tilde{u}_{\infty}))-f(\tilde{u}_{\infty},h(\tilde{u}_{\infty}))\equiv 0 
\end{equation}
due to the relation $q(\tilde{u}(\infty,t),\tilde{v}(\infty,t))=0$. About this equation, we have
\begin{prop}\label{prop3.1}
There exists a unique function $\psi_1=\psi_1(\nu,\tilde{u}_{\infty})$ defined in a neighborhood of $(0,u_o)\equiv (0,u(0,0))$, such that $\mu=\psi_1(\nu,\tilde{u}_{\infty})$ solves \eqref{3.10} and satisfies 
\begin{equation}\label{3.12}
\left.\frac{\partial \psi_1}{\partial \nu}\right|_{(0,u_o)}=-[(f_u+f_vh_u)^{-1}f_v](u_o,h(u_o)),\qquad
\left.\frac{\partial \psi_1}{\partial \tilde{u}_{\infty}}\right|_{(0,u_o)}= 0.  
\end{equation}
\end{prop}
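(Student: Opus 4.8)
The plan is to apply the Implicit Function Theorem to $\varphi_1$ regarded as a function of $\mu$ with $(\nu,\tilde u_\infty)$ as parameters. First I would check that the base point lies on the zero set: directly from the definition in \eqref{3.10},
\begin{equation*}
\varphi_1(0,0,u_o)=f(u_o,h(u_o))-f(u_o,h(u_o))=0.
\end{equation*}
Next I would compute the partial Jacobian with respect to $\mu$. By the chain rule,
\begin{equation*}
\frac{\partial \varphi_1}{\partial \mu}=f_u+f_vh_u,
\end{equation*}
with $f_u,f_v,h_u$ evaluated at $(\mu+\tilde u_\infty,\nu+h(\mu+\tilde u_\infty))$; at the base point this equals $(f_u+f_vh_u)(u_o,h(u_o))$, which is invertible by Assumption \ref{asp3.1}(III). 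Since $f$ and $h$ are smooth, the Implicit Function Theorem then yields a unique smooth function $\mu=\psi_1(\nu,\tilde u_\infty)$ defined near $(0,u_o)$ with $\psi_1(0,u_o)=0$ solving \eqref{3.10}.

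It remains to evaluate the two first-order derivatives of $\psi_1$ at $(0,u_o)$ through the standard identity $\partial\psi_1=-(\partial_\mu\varphi_1)^{-1}\partial\varphi_1$. Differentiating $\varphi_1$ in $\nu$ picks up only the second argument of the first summand, giving $\partial_\nu\varphi_1=f_v$, so
\begin{equation*}
\left.\frac{\partial\psi_1}{\partial\nu}\right|_{(0,u_o)}=-[(f_u+f_vh_u)^{-1}f_v](u_o,h(u_o)),
\end{equation*}
which is the first identity in \eqref{3.12}. For the $\tilde u_\infty$-derivative the crucial observation is the subtractive structure of $\varphi_1$: differentiating both summands yields
\begin{equation*}
\frac{\partial\varphi_1}{\partial\tilde u_\infty}=(f_u+f_vh_u)(\mu+\tilde u_\infty,\nu+h(\mu+\tilde u_\infty))-(f_u+f_vh_u)(\tilde u_\infty,h(\tilde u_\infty)).
\end{equation*}
At the base point $(0,0,u_o)$ the two evaluation points coincide at $(u_o,h(u_o))$, so this difference vanishes, whence $\partial_{\tilde u_\infty}\psi_1|_{(0,u_o)}=0$, the second identity in \eqref{3.12}.

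The argument is essentially routine once the hypotheses of the Implicit Function Theorem are verified; the only substantive input is the invertibility of $f_u+f_vh_u$ furnished by Assumption \ref{asp3.1}(III), which is precisely the non-characteristic condition for the equilibrium system \eqref{3.8}. The single point deserving care is the vanishing of $\partial_{\tilde u_\infty}\psi_1$: rather than an accident, it reflects that $\tilde u_\infty$ enters $\varphi_1$ only through a difference of $f$ evaluated at the perturbed state and at the background equilibrium state, so the leading contributions cancel exactly at the base point. I expect no genuine obstacle beyond keeping the chain-rule bookkeeping straight.
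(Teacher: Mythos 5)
Your proposal is correct and follows essentially the same route as the paper: compute $\varphi_1(0,0,u_o)=0$ and the three partial derivatives of $\varphi_1$ at the base point, invoke the implicit function theorem via the invertibility of $f_u+f_vh_u$ from Assumption \ref{asp3.1}(III), and read off \eqref{3.12} from the identity $\partial\psi_1=-(\partial_\mu\varphi_1)^{-1}\partial\varphi_1$. Your explicit explanation of why $\partial_{\tilde u_\infty}\varphi_1$ vanishes at the base point (the two evaluation points of $f_u+f_vh_u$ coincide there) is exactly the computation the paper summarizes as ``a simple computation shows.''
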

\begin{proof}
A simple computation shows 
\begin{align*}
\left.\frac{\partial \varphi_1}{\partial \tilde{u}_{\infty}}\right|_{(0,0,u_o)}=0,\quad 
\left.\frac{\partial \varphi_1}{\partial \nu}\right|_{(0,0,u_o)}=f_v(u_o,h(u_o)),\quad 
\left.\frac{\partial \varphi_1}{\partial \mu}\right|_{(0,0,u_o)}=[f_u+f_vh_u](u_o,h(u_o)).
\end{align*}
Since $[f_u+f_vh_u](u_o,h(u_o))$ is invertible, we can apply the implicit function theorem to $\varphi_1=\varphi_1(\mu,\nu,\tilde{u}_{\infty})$ at $(0,0,u_o)$ to get the function $\psi_1(\nu,\tilde{u}_{\infty})$ such that 
$\mu=\psi_1(\nu,\tilde{u}_{\infty})$. Thus the relations in \eqref{3.12} immediately follow.
\end{proof}

On the other hand, in terms of the new variables $\mu$ and $\nu$, the equation \eqref{3.9} can be rewritten as
\begin{eqnarray}\label{3.13}
\left\{
{\begin{array}{*{20}l}
\vspace{1.2mm}(f_u+f_vh_u)\mu'+f_v\nu'=0,\\[2mm]
(g_u+g_vh_u)\mu'+g_v\nu'=\hat{S}\nu,
  \end{array}}
 \right.
\end{eqnarray}
where $f_u, f_v, g_u, g_v, h_u$ are all evaluated at $U_{\nu} := (\mu+\tilde{u}_{\infty},\nu+h(\mu+\tilde{u}_{\infty}))$ and 
$$
\hat{S}=\int_0^1q_v(\mu+\tilde{u}_{\infty},\theta \nu+h(\mu+\tilde{u}_{\infty}))d\theta.
$$ 
Multiplying \eqref{3.13} with $P^{-*}A_0=P^{-*}A_0(U_{\nu})$ from the left yields
$$
P^{-*}A_0F_UP^{-1}\left({\begin{array}{*{20}c}
  \vspace{1.5mm}\mu'\\
                \nu'
  \end{array}}\right)=P^{-*}A_0P^{-1}P\left({\begin{array}{*{20}c}
  \vspace{1.5mm}0 &  \\
                  & \hat{S}
  \end{array}}\right)\left({\begin{array}{*{20}c}
  \vspace{1.5mm}\mu\\
                \nu
  \end{array}}\right).
$$ 
According to \eqref{3.4}, this can be written as 
\begin{align*}
 \left({\begin{array}{*{20}c}
  \vspace{1.5mm}A_{11} & A_{12}\\
                A_{12}^* & A_{22}
  \end{array}}\right)\left({\begin{array}{*{20}c}
  \vspace{1.5mm}\mu'\\
                \nu'
  \end{array}}\right)=\left({\begin{array}{*{20}c}
  \vspace{1.5mm}0 & A_{012}\hat{S}\\
                0 & A_{02}\hat{S}
  \end{array}}\right)\left({\begin{array}{*{20}c}
  \vspace{1.5mm}\mu\\
                \nu
  \end{array}}\right).
\end{align*}
Here $A_{11}, A_{12}, A_{22}, A_{012}$ and $A_{02}$ are all evaluated at $U_{\nu}$. 
Notice that the matrix $A_{11}(U_{\nu})$ is invertible for $U_{\nu}$ close to $U_0$ since 
the matrix $A_{11}(U_0)=[A_{01}(f_u+f_vh_u)](U_0)$ is invertible due to Assumption \ref{asp3.1} (III).
Then we deduce from the last equation that 
\begin{equation*}
	\mu'=-A_{11}^{-1}A_{12}\nu'+A_{11}^{-1}A_{012}\hat{S}\nu
\end{equation*}
and  
\begin{equation}\label{3.14}
(A_{22}-A_{12}^*A_{11}^{-1}A_{12})\nu'=(A_{02}\hat{S}-A_{12}^*A_{11}^{-1}A_{012}\hat{S})\nu.
\end{equation}

Furthermore, from Assumption \ref{asp3.1} (III) and the congruent transformation 
\begin{align*} 
\left({\begin{array}{*{20}c}
  \vspace{2mm}I & 0 \\
  -A_{12}^*A_{11}^{-1} & I
  \end{array}}\right)
    \left({\begin{array}{*{20}c}
  \vspace{2mm}A_{11} & A_{12} \\
  A_{12}^* & A_{22}
  \end{array}}\right)
  \left({\begin{array}{*{20}c}
  \vspace{2mm}I & -A_{11}^{-1}A_{12} \\
  0 & I
  \end{array}}\right) 
  =\left({\begin{array}{*{20}c}
  \vspace{2mm}A_{11} & 0 \\
  0 & A_{22}-A_{12}^*A_{11}^{-1}A_{12}
  \end{array}}\right), 
\end{align*}
we see that the symmetric matrix $(A_{22}-A_{12}^*A_{11}^{-1}A_{12})(U_{\nu})$ in \eqref{3.14} has precisely $n^o$ zero eigenvalues for $U_{\nu}\in G$.
Thus, there exists an $r\times n^o$-matrix $P_0=P_0(U_{\nu})$ such that 
$$
P_0^*(A_{22}-A_{12}^*A_{11}^{-1}A_{12})\equiv 0 
$$ 
and there is an $r\times (r-n^o)$-matrix $P_2=P_2(U_{\nu})$ such that 
$P_2^*(A_{22}-A_{12}^*A_{11}^{-1}A_{12})(U_\nu)P_2$ is invertible.
In addition, $P_0$ and $P_2$ are smooth with respect to $U_{\nu}$ \cite{Ka} and have properties $P_0^*P_0=I_{n^o}$, $P_2^*P_2=I_{r-n^o}$ and 
$P_2^*P_0=0$.

Multiplying \eqref{3.14} with $P_0^*$ from the left gives
\begin{equation*} 
  P_0^*(A_{02}\hat{S}-A_{12}^*A_{11}^{-1}A_{012}\hat{S})\nu=0.
\end{equation*}
Let $\bar{P}_0=P_0(U_0)$ and $\bar{P}_2=P_2(U_0)$ be constant matrices. We decompose $\nu=\bar{P}_2w_2 + \bar{P}_0w_0$ with $w_2=\bar{P}_2^*\nu$ and $w_0=\bar{P}_0^*\nu$. Then the last equation can be rewritten as
\begin{align}\label{3.16}
  \varphi_2(w_2,w_0,\tilde{u}_{\infty}):=P_0^*(A_{02}\hat{S}-A_{12}^*A_{11}^{-1}A_{012}\hat{S})(\bar{P}_2w_2+\bar{P}_0w_0)\equiv0.
\end{align}
About this equation, we have
\begin{prop}\label{prop3.2}
There exists a unique function $\psi_2=\psi_2(w_2,\tilde{u}_{\infty})$ defined in a neighborhood of $(0,u_o)\equiv (0,u(0,0))$, such that $w_0=\psi_2(w_2,\tilde{u}_{\infty})$ solves \eqref{3.16} and satisfies 
\begin{equation}\label{3.18}
  \left.\dfrac{\partial \psi_2}{\partial w_2}\right|_{(0,u_o)}=-(\bar{P}_0^*A_{02}q_v\bar{P}_0)^{-1}(\bar{P}_0^*A_{02}q_v\bar{P}_2)\bigg|_{U=U_0},\qquad 
  \left.\dfrac{\partial \psi_2}{\partial \tilde{u}_{\infty}}\right|_{(0,u_o)}=0.
\end{equation} 
\end{prop}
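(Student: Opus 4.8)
The plan is to prove Proposition~\ref{prop3.2} by the same implicit-function-theorem argument that established Proposition~\ref{prop3.1}, exploiting the fact that at the base point the vector $\bar{P}_2 w_2+\bar{P}_0 w_0$ vanishes, so that only the explicit linear factor of $\varphi_2$ contributes to the relevant derivatives.

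First I would evaluate all the ingredients of $\varphi_2$ at $(w_2,w_0,\tilde{u}_{\infty})=(0,0,u_o)$. Here $w_2=w_0=0$ forces $\nu=0$, and $\mu=\psi_1(0,u_o)=0$ by the uniqueness in Proposition~\ref{prop3.1} (since $\mu=0$ solves $\varphi_1(\mu,0,\tilde{u}_{\infty})=0$). Hence $U_\nu=(u_o,h(u_o))=U_0\in\mathcal{E}$, so that $P_0(U_\nu)=\bar{P}_0$, the averaged matrix $\hat{S}$ reduces to $q_v(U_0)$, and the off-diagonal block satisfies $A_{012}(U_0)=0$ because $U_0$ lies on the equilibrium manifold (as recorded after~\eqref{3.4}).

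Next I would compute $\partial\varphi_2/\partial w_0$ at this base point. Writing $\varphi_2=C(U_\nu)(\bar{P}_2 w_2+\bar{P}_0 w_0)$ with $C:=P_0^*(A_{02}\hat{S}-A_{12}^*A_{11}^{-1}A_{012}\hat{S})$, the product rule produces one term in which $C$ is differentiated through its $U_\nu$-dependence and one term equal to $C\bar{P}_0$. At the base point the first term is annihilated by the vanishing vector $\bar{P}_2 w_2+\bar{P}_0 w_0=0$, and $A_{012}(U_0)=0$ kills the remaining $A_{12}^*A_{11}^{-1}A_{012}\hat{S}$ contribution, leaving
\[
\left.\frac{\partial\varphi_2}{\partial w_0}\right|_{(0,0,u_o)}=\bar{P}_0^*A_{02}q_v\bar{P}_0\big|_{U_0}.
\]
Since $S=A_{02}q_v(U_0)$ is symmetric negative definite (it is the block $\diag(0,A_{02}q_v)$ shown negative definite on $\mathcal{E}$ after~\eqref{3.4}, equivalently the stable matrix of Section~\ref{section2}) and $\bar{P}_0$ has full column rank $n^o$, this $n^o\times n^o$ matrix is negative definite, hence invertible, and the implicit function theorem yields the unique $w_0=\psi_2(w_2,\tilde{u}_{\infty})$ near $(0,u_o)$.

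Finally I would read off~\eqref{3.18} from the formula $\partial\psi_2/\partial(\cdot)=-(\partial\varphi_2/\partial w_0)^{-1}\,\partial\varphi_2/\partial(\cdot)$. The same vanishing-vector observation gives $\partial\varphi_2/\partial w_2|_{(0,0,u_o)}=\bar{P}_0^*A_{02}q_v\bar{P}_2|_{U_0}$, which reproduces the stated expression for $\partial\psi_2/\partial w_2$. For $\tilde{u}_{\infty}$ the argument is even simpler: with $w_2=w_0=0$ the factor $\bar{P}_2 w_2+\bar{P}_0 w_0$ is identically zero in $\tilde{u}_{\infty}$, so $\varphi_2(0,0,\tilde{u}_{\infty})\equiv 0$ and thus $\partial\varphi_2/\partial\tilde{u}_{\infty}|_{(0,0,u_o)}=0$, giving $\partial\psi_2/\partial\tilde{u}_{\infty}|_{(0,u_o)}=0$. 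The only delicate point---and the mild main obstacle---is the chain-rule bookkeeping through $U_\nu=(\psi_1+\tilde{u}_{\infty},\,\nu+h(\psi_1+\tilde{u}_{\infty}))$: one must verify that every term in which the $U_\nu$-dependent coefficient $C$ is differentiated carries the factor $\bar{P}_2 w_2+\bar{P}_0 w_0$ and therefore drops out at the base point, so that the detailed values of $\partial\psi_1/\partial(\cdot)$ never enter the final derivatives.
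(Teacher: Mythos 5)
Your proposal is correct and follows essentially the same route as the paper: apply the implicit function theorem to $\varphi_2$ at $(0,0,u_o)$, using $A_{012}(U_0)=0$ and the negative definiteness of $\bar{P}_0^*A_{02}q_v(U_0)\bar{P}_0$ to get invertibility of $\partial\varphi_2/\partial w_0$, then read off \eqref{3.18} from the remaining partial derivatives. Your explicit remark that all terms in which the $U_\nu$-dependent coefficient is differentiated are annihilated by the vanishing factor $\bar{P}_2w_2+\bar{P}_0w_0$ is exactly the (unstated) justification behind the paper's terse computation.
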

\begin{proof}
Since $A_{012}(U_0)=0$, it follows from \eqref{3.16} that
\begin{align*}
\left.\dfrac{\partial \varphi_2}{\partial w_0}\right|_{(0,0,u_o)}
= P_0^*(A_{02}\hat{S}-A_{12}^*A_{11}^{-1}A_{012}\hat{S}) \bar{P}_0\bigg|_{U=U_0}
=\bar{P}_0^*A_{02}q_v(U_0)\bar{P}_0 
\end{align*}
is negative definite. We can apply the implicit function theorem to $\varphi_2=\varphi_2(w_2,w_0,\tilde{u}_{\infty})$ at $(0,0,u_o)$ to get a function $\psi_2(w_2,\tilde{u}_{\infty})$ in a neighborhood of $(0,u_o)$ such that $w_0=\psi_2(w_2,\tilde{u}_{\infty})$. Moreover, the computations
$$
\left.\dfrac{\partial\varphi_2}{\partial \tilde{u}_{\infty}}\right|_{(0,0,u_o)}=0 
$$
and
$$
\left.\dfrac{\partial\varphi_2}{\partial w_2}\right|_{(0,0,u_o)}=P_0^*(A_{02}\hat{S}-A_{12}^*A_{11}^{-1}A_{012}\hat{S}) \bar{P}_2\bigg|_{U=U_0}=\bar{P}_0^*A_{02}q_v(U_0)\bar{P}_2 
$$
imply the relation \eqref{3.18}.
\end{proof}

Next we multiply \eqref{3.14} with $\bar{P}_2^*$ from the left to obtain
\begin{align*} 
\bar{P}_2^*(A_{22}-A_{12}^*A_{11}^{-1}A_{12})(\bar{P}_2w_2'+\bar{P}_0w_0')=\bar{P}_2^*(A_{02}\hat{S}-A_{12}^*A_{11}^{-1}A_{012}\hat{S})\nu.
\end{align*}
By Proposition \ref{prop3.2}, we can express $w_0$ in terms of $w_2$ and rewrite the last equation as 
\begin{align*}
K(w_2,\tilde{u}_{\infty})w_2'=\bar{P}_2^*(A_{02}\hat{S}-A_{12}^*A_{11}^{-1}A_{012}\hat{S})[\bar{P}_2w_2+\bar{P}_0\psi_2(w_2,\tilde{u}_{\infty})],
\end{align*}
where 
$$
K(w_2,\tilde{u}_{\infty})=\bar{P}_2^*(A_{22}-A_{12}^*A_{11}^{-1}A_{12})\left[\bar{P}_2+\bar{P}_0\dfrac{\partial \psi_2}{\partial w_2}\right].
$$ 
Notice that $K(0,u_o)=\bar{P}_2^*(A_{22}-A_{12}^*A_{11}^{-1}A_{12})(U_0)\bar{P}_2= \Lambda_2$ is invertible and thereby $K(w_2,\tilde{u}_{\infty})$ is invertible near $(0,u_o)$. Then the last equation can be rewritten as
\begin{align}
w_2'= &~K^{-1}(w_2,\tilde{u}_{\infty})\bar{P}_2^*(A_{02}\hat{S}-A_{12}^*A_{11}^{-1}A_{012}\hat{S})[\bar{P}_2w_2+\bar{P}_0\psi_2(w_2,\tilde{u}_{\infty})]\nonumber\\[2mm]
\equiv &~H(w_2,\tilde{u}_{\infty}).\label{3.19}
\end{align}

By Proposition \ref{prop3.2}, we have $\psi_2(0,u_o)=0$ and thereby $H(0,u_o)=0$. Namely, $(w_2,\tilde{u}_{\infty})=(0,u_o)$ is a critical point for the dynamical system \eqref{3.19} with parameter $\tilde{u}_{\infty}$. 
Moreover, we use the relation \eqref{3.18} and compute
\begin{align*}
\left.\dfrac{\partial H}{\partial w_2}\right|_{(0,u_o)}
=&K^{-1}(0,u_o)\bigg[\bar{P}_2^*A_{02}q_v\bar{P}_2-(\bar{P}_2^*A_{02}q_v\bar{P}_0)(\bar{P}_0^* A_{02}q_v \bar{P}_0)^{-1}(\bar{P}_0^* A_{02}q_v \bar{P}_2)\bigg](U_0)\\[2mm]
=&\Lambda_2^{-1}G_2,
\end{align*}
where $G_2$ is defined in \eqref{2.39} with $S=A_{02}q_v$.
By referring to the discussion below \eqref{2.44}, this Jacobian matrix has $(n^+-n_1^+)$ negative eigenvalues and $(r-n^o-n^++n_1^+)$ positive eigenvalues. 
According to the theory of ordinary differential equations \cite{Co}, there is a $(n^+-n_1^+)$-dimensional stable manifold $\mathcal{S}(\tilde{u}_{\infty})$ of $H(w_2,\tilde{u}_{\infty})$ near $w_2=0$. Then there are $(r-n^o-n^++n_1^+)$ functions $\tilde{S}_i(w_2,\tilde{u}_{\infty})$, defined in a neighborhood of $(0,u_o)$, such that 
$$
\mathcal{S}(\tilde{u}_{\infty})=\{w_2\in R^{r-n^o}\ |\ \tilde{S}_i(w_2,\tilde{u}_{\infty})=0,\ i=1,2,...,(r-n^o-n^++n_1^+) \}.
$$
Moreover, $\tilde{S}:=(\tilde{S}_1,\tilde{S}_2,...\tilde{S}_{r-n^o-n^++n_1^+})^*$ has the following properties
\begin{equation}\label{3.20}
  \tilde{S}(0,\tilde{u}_{\infty})\equiv0,\quad  \dfrac{\partial \tilde{S}}{\partial w_2}(0,\tilde{u}_{\infty})=L_2^U(\tilde{u}_{\infty}),
\end{equation}
where $L_2^U(\tilde{u}_{\infty})$ is a left-unstable matrix of $\frac{\partial H}{\partial w_2}(0,\tilde{u}_{\infty})$.

With $\tilde{S}=\tilde{S}(w_2,\tilde{u}_{\infty})$ defined above, we turn to the boundary condition 
$$
B(\mu+\tilde{u}_{\infty},\nu+h(\mu+\tilde{u}_{\infty});t)=0
$$
and find the initial value $w_2(0)$ for the dynamical system \eqref{3.19}. 
For this purpose, we firstly recall the decomposition $\tilde{u}_{\infty}=\bar{u}(0,t)=R_1^U \alpha + R_1^S \beta$ in \eqref{2.46}, where $R_1^S$ and $R_1^U$ are right-stable and right-unstable matrix of $[f_u+f_vh_u](u_o,h(u_o)) $, respectively. 
On the other hand, we know from Propositions \ref{prop3.1} and \ref{prop3.2} that $\mu=\psi_1(\nu,\tilde{u}_{\infty})$ and $w_0=\psi_2(w_2,\tilde{u}_{\infty})$ while $\nu=\bar{P}_2w_2+\bar{P}_0w_0$ and $w_2=w_2(0)$.
With these, we define 
\begin{align*}
\Psi(\beta,\alpha ,w_2,t):= B(\psi_1(\nu,\tilde{u}_{\infty})+\tilde{u}_{\infty},\nu+h(\psi_1(\nu,\tilde{u}_{\infty})+\tilde{u}_{\infty}),t).
\end{align*}
In this way, we arrive at the following system of $(r-n^o+n_1^+)$ nonlinear algebraic equations:
\begin{equation}\label{temp1}
\left\{
{\begin{array}{*{20}l}
\vspace{1.2mm}\Psi(\beta,\alpha ,w_2,t) = 0  ,\\[2mm]
\tilde{S}(w_2,R_1^S \beta+ R_1^U \alpha) = 0.
  \end{array}}
 \right.  
\end{equation}

To solve these equations, we try to employ the implicit function theorem. To do this, we use 
\eqref{3.12} and \eqref{3.18}:
$$
\left.\dfrac{\partial \psi_1}{\partial \tilde{u}_{\infty}}\right|_{(0,u_o)}=0,\quad
\left.\dfrac{\partial \psi_2}{\partial \tilde{u}_{\infty}}\right|_{(0,u_o)}=0\quad\text{and thereby}\ 
\left.\dfrac{\partial \nu}{\partial \tilde{u}_{\infty}}\right|_{(0,u_o)}=0
$$ 
to compute 
\begin{align}\label{3.21}
\left.\dfrac{\partial \Psi}{\partial \alpha}\right|_{(\beta_o,\alpha_o,0,0)}
=&(B_u+B_vh_u)R_1^U\bigg|_{U=U_0}.
\end{align}
Here $\alpha_o$ and $\beta_o$ denote the values of $\alpha$ and $\beta$ at $t=0$.
Furthermore, we deduce from \eqref{3.12} and \eqref{3.18} that
\begin{align*}
\left.\dfrac{\partial \Psi}{\partial w_2}\right|_{(\beta_o,\alpha_o,0,0)}
=&\left[(B_u+B_vh_u)\frac{\partial \psi_1}{\partial \nu}+B_v\right]\left[\bar{P}_2+\bar{P}_0\frac{\partial \psi_2}{\partial w_2}\right]\bigg|_{(\beta_o,\alpha_o,0,0)} \\[2mm]
=&~[B_v-(B_u+B_vh_u)(f_u+f_vh_u)^{-1}f_v]G_1\bigg|_{U=U_0} 
\end{align*}
with $G_1=[\bar{P}_2-\bar{P}_0(\bar{P}_0^*A_{02}q_v\bar{P}_0)^{-1}(\bar{P}_0^*A_{02}q_v\bar{P}_2)](U_0)$.
Combining this with \eqref{3.20} and \eqref{3.21}, we have
$$
\left.\dfrac{\partial (\Psi,\tilde{S})}{\partial (\alpha,w_2)}\right|_{(\beta_o,\alpha_o,0,0)}
=\left(
{\begin{array}{*{20}c}
\vspace{1.2mm}(B_u+B_vh_u)R_1^U & [B_v-(B_u+B_vh_u)(f_u+f_vh_u)^{-1}f_v]G_1 ,\\[2mm]
                   0            & L_2^U
  \end{array}}
 \right)\Bigg|_{U=U_0}.
$$
This matrix is invertible due to $[(f_u+f_vh_u)^{-1}f_v](u_o,h(u_o))=A_{11}^{-1}A_{12}(U_0)$ and the modified GKC. Thus we can apply the implicit function theorem to \eqref{temp1} and obtain the following solution 
$$
\alpha=\tilde{\alpha}(\beta,t),\qquad \quad w_2=\tilde{w_2}(\beta,t)
$$
locally. 
Like that for the linear problem, the first relation is the reduced boundary condition for the equilibrium system. 

Once the reduced problem is solved, the outgoing mode $\beta$ is known. With this $\beta$, the initial value $w_2(0)$ can be determined by the second relation above. 
With $w_2(0)$ thus obtained, we solve the dynamical system \eqref{3.19} to get $w_2=w_2(\xi,t)$. In this way, the boundary-layer $(\mu,\nu)$ is obtained by using Propositions \ref{prop3.1} and \ref{prop3.2}.
 
\

\begin{appendices}
\section{Appendix}
\subsection{Some details in the proof of Lemma \ref{lemma2.1}}\label{AppendA}
In this appendix, we show that the matrices $S_{00}(\xi,\eta)$ and $I_{n-r}+N_1(\xi,\eta)$ are invertible for any $\eta\geq 0$ and any $\xi$ with $Re\xi>0$, which are used in the proof of Lemma \ref{lemma2.1}. 

Notice that $-S_{00}=-P_0^*(\eta S-\xi A_{02})P_0$ can be expressed in the form $A+iB$ with $A$ a symmetric positive definite matrix and $B$ a symmetric matrix. In order to show that $S_{00}$ is invertible, it suffices to prove 
\begin{prop}\label{lemmaA.1}
Let $A$ be a symmetric positive definite matrix and $B$ be a symmetric matrix. Then the complex matrix $A+iB$ is invertible.
\end{prop}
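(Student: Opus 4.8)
The plan is to establish invertibility by showing that the (square) matrix $A+iB$ has trivial kernel over $\mathbb{C}^n$. Accordingly, I would start from an arbitrary $x\in\mathbb{C}^n$ satisfying $(A+iB)x=0$ and multiply this relation on the left by the conjugate transpose $\bar{x}^*$, where $\bar{x}$ denotes the entrywise complex conjugate of $x$ and $*$ denotes transpose (as in the paper), so that $\bar{x}^*$ is the Hermitian adjoint of $x$. This yields the single scalar identity $\bar{x}^*Ax + i\,\bar{x}^*Bx = 0$.

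The crux of the argument is that both quadratic forms $\bar{x}^*Ax$ and $\bar{x}^*Bx$ are real numbers. This holds because a real symmetric matrix is Hermitian, and the form $\bar{x}^*Mx$ attached to any Hermitian $M$ is real-valued; concretely, $\overline{\bar{x}^*Ax}=x^*A\bar{x}=\bar{x}^*Ax$, where the first equality uses that $A$ is real and the second uses $A=A^*$, and similarly for $B$. Consequently, separating the identity $\bar{x}^*Ax + i\,\bar{x}^*Bx = 0$ into its real and imaginary parts forces $\bar{x}^*Ax=0$ and $\bar{x}^*Bx=0$ individually.

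It then remains to invoke the positive definiteness of $A$. Writing $x=p+iq$ with $p,q\in R^n$ and using the symmetry of $A$ to cancel the cross terms, one finds $\bar{x}^*Ax = p^*Ap + q^*Aq$, which is strictly positive unless $p=q=0$. Hence $\bar{x}^*Ax=0$ forces $x=0$, so $A+iB$ is injective and therefore invertible. I expect no genuine obstacle in this proof; the only point deserving care is the reality of the two Hermitian forms, which rests entirely on the symmetry of $A$ and $B$, together with the harmless upgrade of the real positive definiteness of $A$ to the complex statement $\bar{x}^*Ax>0$ for $x\neq0$.
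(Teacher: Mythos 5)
Your argument is correct, but it takes a genuinely different route from the paper. You prove injectivity via the quadratic form: from $(A+iB)x=0$ you extract $\bar{x}^*Ax+i\,\bar{x}^*Bx=0$, observe that both Hermitian forms are real because $A$ and $B$ are real symmetric, and conclude $\bar{x}^*Ax=0$, hence $x=0$ by positive definiteness. All the steps check out, including the identity $\bar{x}^*Ax=p^*Ap+q^*Aq$ for $x=p+iq$. The paper instead argues by an explicit factorization: it writes $A+iB=A^{1/2}\bigl(I+iA^{-1/2}BA^{-1/2}\bigr)A^{1/2}$, orthogonally diagonalizes the symmetric matrix $A^{-1/2}BA^{-1/2}$ as $O\Lambda O^*$ with $\Lambda$ real diagonal, and notes that $I+i\Lambda$ is trivially invertible. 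Your approach is shorter and more elementary, but the paper's factorization is not just a stylistic choice: immediately after the proof it reads off the explicit formula $(A+iB)^{-1}=A^{-1/2}O(I+i\Lambda)^{-1}O^*A^{-1/2}$ and uses it to show that the inverse again has the form $\bar{A}+i\bar{B}$ with $\bar{A}$ symmetric positive definite and $\bar{B}$ symmetric --- a structural fact needed later (e.g., for the invertibility of $I_{n-r}+N_1(\xi,\eta)$ and in Appendix C). Your kernel argument establishes invertibility but does not by itself deliver that representation of the inverse, so if one adopted it in the paper a separate argument would still be needed for the corollary.
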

\begin{proof}
By our assumptions, the matrix $A^{-1/2}BA^{-1/2}$ is symmetric. Then there exists an orthonormal matrix $O$ such that $O^*A^{-1/2}BA^{-1/2}O=\Lambda$ with $\Lambda$ a real diagonal matrix. Thus
\begin{align*}
A+iB=&A^{1/2}(I+iA^{-1/2}BA^{-1/2})A^{1/2}\\[2mm]
    =&A^{1/2}(I+iO \Lambda O^*)A^{1/2}\\[2mm]
    =&A^{1/2}O(I+i \Lambda )O^*A^{1/2}.
\end{align*}
Clearly, the diagonal matrix $I+i \Lambda$ is invertible and hence $A+iB$ is invertible. \\
\end{proof}

From the proof we see that  
\begin{align*}
(A+iB)^{-1}=A^{-1/2}O(I+i \Lambda )^{-1}O^*A^{-1/2}.
\end{align*}
Clearly, $(I+i \Lambda )^{-1}=\Lambda_1+i \Lambda_2$ where $\Lambda_1$, $\Lambda_2$ are real diagonal matrices and the elements of $\Lambda_1$ are positive. Therefore, the inverse matrix $(A+iB)^{-1}$ can also be expressed as $\bar{A}+i\bar{B}$ with $\bar{A}$ a symmetric positive definite matrix and $\bar{B}$ a symmetric matrix. 

As a corollary, we see that the inverse matrix 
$$
-\xi S_{00}^{-1}=-\big(P_0^*(\frac{\eta}{\xi} S-  A_{02})P_0\big)^{-1}
$$ 
can be written as $A+iB$ with $A$ a symmetric positive definite matrix and $B$ a symmetric matrix. Then the matrix
\begin{align*}
A_{01}^{1/2}\big(I_{n-r}+N_1(\xi,\eta)\big)A_{01}^{-1/2}=&I_{n-r}+A_{01}^{1/2}(A_{11}^{-1}A_{12}P_0) (A+iB) (P_0^*A_{12}^*A_{11}^{-1})A_{01}^{1/2}
\end{align*}
also satisfies the condition in Proposition \ref{lemmaA.1}. Consequently, the matrix $I_{n-r}+N_1(\xi,\eta)$ is invertible.

\subsection{Proof of Lemma \ref{lemma2.2}}\label{AppendB}
In this part, we present a proof of Lemma \ref{lemma2.2}.
\begin{proof}
Let $\lambda$ be an eigenvalue of $M(\xi,\eta)$. Then we have $|\lambda I- M(\xi,\eta)|=0$
and thereby 
\begin{equation}\label{B.1}
  \bigg|\lambda\left({\begin{array}{*{20}c}
  \vspace{1.5mm}I_{n-r}+N_1 & N_2\\
  0 & I_{r-n^o}
  \end{array}}\right)-\left({\begin{array}{*{20}c}
  \vspace{1.5mm}-\xi A_{11}^{-1}A_{01} & 0\\
  N_3 & N_4
  \end{array}}\right)\bigg|=0 
\end{equation}
due to \eqref{2.23}.
Recall from \eqref{2.22} that 
$$
N_1 = -\xi (A_{11}^{-1}A_{12}P_0) S_{00}^{-1} (P_0^*A_{12}^*A_{11}^{-1})A_{01}.
$$
Set $\tilde{P}_0:=A_{11}^{-1}A_{12}P_0$ 
and
\begin{eqnarray*} 
\left\{{\begin{array}{*{20}l}
\vspace{3.5mm}B_{11}=&\lambda\left({\begin{array}{*{20}c}
  \vspace{1.5mm}I_{n-r} & N_2\\
  0 & I_{r-n^o}
  \end{array}}\right)-\left({\begin{array}{*{20}c}
  \vspace{1.5mm}-\xi A_{11}^{-1}A_{01} & 0\\
  N_3 & N_4
  \end{array}}\right),\\[3mm]
\vspace{3mm}B_{12}=&\left({\begin{array}{*{20}c}
  \vspace{1.5mm}\lambda \tilde{P}_0 \\
           0           
  \end{array}}\right),\\[3mm]
\vspace{2mm}B_{21}=&\left({\begin{array}{*{20}c}
  -\xi \tilde{P}_0^* A_{01},& 0                  
  \end{array}}\right),\\[2mm]
B_{22}=&-S_{00}.
\end{array}}\right.
\end{eqnarray*}
Then \eqref{B.1} actually is
$$\big|B_{11}-B_{12}B_{22}^{-1}B_{21}\big|=0,$$
or 
\begin{align*}
0=&\big|B_{11}-B_{12}B_{22}^{-1}B_{21}\big|\big|B_{22}\big|
 =\bigg|\left({\begin{array}{*{20}c}
  \vspace{1.5mm}B_{11} & B_{12} \\
  B_{21} & B_{22}
  \end{array}}\right) \bigg|\\[2mm]
 =&\left|\left({\begin{array}{*{20}c}
  \vspace{1.5mm}\lambda I & \lambda N_2 & \lambda \tilde{P}_0 \\
  \vspace{1.5mm}0 & \lambda I & 0\\
  0 & 0 & 0
  \end{array}}\right)-
  \left({\begin{array}{*{20}c}
  \vspace{1.5mm}-\xi A_{11}^{-1}A_{01} & 0 & 0 \\
  \vspace{1.5mm}N_3 & N_4 & 0\\
  \xi \tilde{P}_0^* A_{01} & 0 & S_{00}
  \end{array}}\right) \right|.
\end{align*}
Multiplying the last determinant with
$$\left|\left({\begin{array}{*{20}c}
  \vspace{1.5mm} I & - N_2 & - \tilde{P}_0 \\
  \vspace{1.5mm}0 &  I & 0\\
  0 & 0 & I
  \end{array}}\right) \right| $$
from the right, we obtain
\begin{align*}
  0=&\left|\lambda\left({\begin{array}{*{20}c}
  \vspace{1.5mm} I &  & \\
  \vspace{1.5mm} &  I & \\
      &  & 0
  \end{array}}\right)
  -\left({\begin{array}{*{20}c}
  \vspace{1.5mm}-\xi A_{11}^{-1}A_{01} & \xi A_{11}^{-1}A_{01} N_2 & \xi A_{11}^{-1}A_{01}\tilde{P}_0 \\[2mm]
  \vspace{1.5mm}N_3 & N_4-N_3N_2 & -N_3\tilde{P}_0 \\[2mm]
  \vspace{1.7mm}         \xi \tilde{P}_0^* A_{01} & -\xi \tilde{P}_0^* A_{01}N_2  & S_{00}-\xi \tilde{P}_0^*A_{01}\tilde{P}_0 
  \end{array}}\right)\right|.
\end{align*}

Recall from \eqref{2.22} that  
\begin{align*}
N_2=\tilde{P}_2-\tilde{P}_0S_{00}^{-1}S_{02}, \quad 
N_3=\xi \Lambda_2^{-1} (\tilde{P}_2^*-S_{20}S_{00}^{-1}\tilde{P}_0^*)A_{01},
\end{align*}
where $\tilde{P}_2:=A_{11}^{-1}A_{12}P_2$. Then we multiply the last determinant with
$$\left|\left({\begin{array}{*{20}c}
  \vspace{1.5mm}A_{11} &  & \\
  \vspace{1.5mm} & \Lambda_2 & \\
      &  & I
  \end{array}}\right)\right| $$
from the left to get
\begin{align*}
  0=&\Bigg|\lambda
  \left({\begin{array}{*{20}c}
  \vspace{1.5mm}A_{11} &  & \\
  \vspace{1.5mm} & \Lambda_2 & \\
      &  & 0
  \end{array}}\right)
  -\left({\begin{array}{*{20}c}
  \vspace{1.5mm}0   &  & \\
  \vspace{1.5mm}       & \Lambda_2 N_4 & 0\\
                       & 0 & S_{00}
  \end{array}}\right)\\[2mm]
  &-\xi
  \left({\begin{array}{*{20}c}
  \vspace{1.5mm}-A_{01} & A_{01}N_2 & A_{01}\tilde{P}_0 \\[2mm]
  \vspace{1.5mm}(\tilde{P}_2^*-S_{20}S_{00}^{-1}\tilde{P}_0^*)A_{01} & -(\tilde{P}_2^*-S_{20}S_{00}^{-1}\tilde{P}_0^*)A_{01}N_2 & -(\tilde{P}_2^*-S_{20}S_{00}^{-1}\tilde{P}_0^*)A_{01}\tilde{P}_0 \\[2mm]
           \tilde{P}_0^*A_{01} & -\tilde{P}_0^*A_{01}N_2  & -\tilde{P}_0^*A_{01}\tilde{P}_0
  \end{array}}\right)
  \Bigg|.
\end{align*}
Moreover, by multiplying this determinant with
$$\bigg|\left({\begin{array}{*{20}c}
  \vspace{1.5mm}I & 0 &0\\
  \vspace{1.5mm}0  & I & S_{20}S_{00}^{-1} \\
  0 & 0 & I
  \end{array}}\right)\bigg|\quad \text{and}\quad \bigg|\left({\begin{array}{*{20}c}
  \vspace{1.5mm}I & 0 &0\\
  \vspace{1.5mm}0  & I & 0\\
  0 & S_{00}^{-1}S_{02} & I
  \end{array}}\right)\bigg|$$
from the left and from the right respectively, we get 
\begin{align*}
  0=\bigg|&\lambda
  \left({\begin{array}{*{20}c}
  \vspace{1.5mm}A_{11} &  & \\
  \vspace{1.5mm} & \Lambda_2 & \\
      &  & 0
  \end{array}}\right)
  -\left({\begin{array}{*{20}c}
  \vspace{1.5mm}0 &  & \\
  \vspace{1.5mm}       & S_{22} & S_{20}\\
                       & S_{02} & S_{00}
  \end{array}}\right)
  -\xi\left({\begin{array}{*{20}c}
  \vspace{1.5mm}-A_{01} & A_{01}\tilde{P}_2 & A_{01}\tilde{P}_0 \\[2mm]
  \vspace{1.5mm}\tilde{P}_2^*A_{01} & -\tilde{P}_2^*A_{01}\tilde{P}_2 & -\tilde{P}_2^*A_{01}\tilde{P}_0\\[2mm]
           \tilde{P}_0^*A_{01} & -\tilde{P}_0^*A_{01}\tilde{P}_2  & -\tilde{P}_0^*A_{01}\tilde{P}_0
  \end{array}}\right)\bigg|. 
\end{align*}
Finally, we multiply the last determinant with 
  $$\bigg|\left({\begin{array}{*{20}c}
  \vspace{1.5mm}I & 0 & 0\\
  0 & P_2 & P_0
  \end{array}}\right)\bigg|\quad \text{and} \quad \bigg|\left({\begin{array}{*{20}c}
  \vspace{1.5mm}I & 0 \\
  \vspace{1.5mm}0 & P_2^* \\
  0 & P_0^*
  \end{array}}\right)\bigg| $$
from the left and from the right to obtain 
\begin{align*}
  0=&\bigg|\lambda
  \left({\begin{array}{*{20}c}
  \vspace{1.5mm}A_{11} & 0 \\
  0 & A_{22}-A_{12}^*A_{11}^{-1}A_{12}
  \end{array}}\right)
  -\eta
  \left({\begin{array}{*{20}c}
  \vspace{1.5mm}0 & 0 \\
  0 & S
  \end{array}}\right)+\xi 
  \left({\begin{array}{*{20}c}
  \vspace{1.5mm}0 & 0 \\
  0 & A_{02}
  \end{array}}\right)\\[1mm]
  &+\xi\left({\begin{array}{*{20}c}
  \vspace{1.5mm}A_{01} & -A_{01}A_{11}^{-1}A_{12} \\
  -A_{12}^*A_{11}^{-1}A_{01} & A_{12}^*A_{11}^{-1}A_{01}A_{11}^{-1}A_{12}
  \end{array}}\right)
  \bigg|\\[2mm]
  =&\bigg|\left({\begin{array}{*{20}c}
  \vspace{1.5mm}I & 0 \\
  -A_{12}^*A_{11}^{-1} & I
  \end{array}}\right)
  \bigg[\xi \left({\begin{array}{*{20}c}
  \vspace{1.5mm}A_{01} & 0 \\
  0 & A_{02}
  \end{array}}\right)+\lambda 
  \left({\begin{array}{*{20}c}
  \vspace{1.5mm}A_{11} & A_{12} \\
  A_{12}^* & A_{22}
  \end{array}}\right)-\eta
  \left({\begin{array}{*{20}c}
  \vspace{1.5mm}0 & 0\\
  0 & S
  \end{array}}\right)\bigg]
  \left({\begin{array}{*{20}c}
  \vspace{1.5mm}I & -A_{11}^{-1}A_{12} \\
  0 & I
  \end{array}}\right)\bigg|.
\end{align*}
Consequently, we have 
$$\bigg|\xi \left({\begin{array}{*{20}c}
  \vspace{1.5mm}A_{01} & 0 \\
  0 & A_{02}
  \end{array}}\right)+\lambda 
  \left({\begin{array}{*{20}c}
  \vspace{1.5mm}A_{11} & A_{12} \\
  A_{12}^* & A_{22}
  \end{array}}\right)-\eta
  \left({\begin{array}{*{20}c}
  \vspace{1.5mm}0 & 0\\
  0 & S
  \end{array}}\right)\bigg|=0.$$

Now if the eigenvalue $\lambda$ is purely imaginary, the last matrix can be written in form of $A+iB$ with $A$ a symmetric positive definite matrix and $B$ a symmetric matrix. Then the last equation contradicts Proposition \ref{lemmaA.1}. This completes the proof of Lemma \ref{lemma2.2}.\\
\end{proof}

\subsection{A detail in Subsection \ref{subsection2.4}}\label{appendC}
In this appendix, we show the fact used in Subsection \ref{subsection2.4} that the matrix $E(\xi',\eta')$ is uniformly bounded for any $\eta'\geq 0$ and $\xi'$ with $Re\xi'>0$. Recall 
$$
E(\xi',\eta')=\left(E_1(\xi',\eta'), \ E_2(\xi',\eta')\right) =\left(-\xi' S_{00}^{-1} P_0^*A_{12}^*A_{11}^{-1}A_{01},\ \ -S_{00}^{-1}S_{02} \right) 
$$
defined in \eqref{2.25}, where 
$$
S_{00}(\xi',\eta')=\eta'(P_0^*SP_0)-\xi'(P_0^*A_{02}P_0),\quad S_{02}(\xi',\eta')=\eta'(P_0^*SP_2)-\xi'(P_0^*A_{02}P_2) 
$$
are given in \eqref{2.16}. Because $P_0^*A_{12}^*A_{11}^{-1}A_{01}$ is constant, we only need to show that $\xi'S_{00}^{-1}$ and $S_{00}^{-1}S_{02}$ are uniformly bounded. 
When $\eta'=0$, these two matrices are independent of $\xi'$ and thereby uniformly bounded. 

For $\eta'\neq 0$, we have
\begin{align*}
\xi'S_{00}^{-1}=&\left(\frac{\eta'}{\xi'}(P_0^*SP_0)-(P_0^*A_{02}P_0)\right)^{-1},\\[1mm]
S_{00}^{-1}S_{02}=&\left((P_0^*SP_0)-\frac{\xi'}{\eta'}(P_0^*A_{02}P_0)\right)^{-1}P_0^*SP_2-\left(\frac{\eta'}{\xi'}(P_0^*SP_0)-(P_0^*A_{02}P_0)\right)^{-1}P_0^*A_{02}P_2.
\end{align*}
Notice that $-P_0^*SP_0$ and $P_0^*A_{02}P_0$ are two symmetric positive definite matrices. 
Thus, it suffices to prove 
\begin{prop}\label{lemmaD.1}
Suppose $A$ and $B$ are symmetric positive definite matrices with constant coefficients, $a$ and $b$ are real numbers. Then the matrix 
\begin{equation*} 
	(A+aB+ibB)^{-1}
\end{equation*}
is uniformly bounded with respect to any $a\geq 0$ and $b\in (-\infty,+\infty)$.
\end{prop}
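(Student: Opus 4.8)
The plan is to derive a uniform coercivity estimate for the matrix $M := A + aB + ibB = A + (a+ib)B$, namely a bound of the form $|Mx| \ge c\,|x|$ with a constant $c>0$ that does not depend on $a$ or $b$. Such a lower bound is equivalent to the invertibility of $M$ together with the uniform estimate $|M^{-1}y| \le c^{-1}|y|$ for all $y$, which is exactly the assertion of the proposition. So the whole argument reduces to producing this one constant $c$.

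To obtain it, I would test $M$ against an arbitrary nonzero complex vector $x$ and examine the quadratic form $x^*Mx = x^*Ax + (a+ib)\,x^*Bx$. Since $A$ and $B$ are real symmetric, both $x^*Ax$ and $x^*Bx$ are real, and by positive definiteness they are strictly positive for $x\neq 0$. Taking real parts and using the hypothesis $a\ge 0$ gives $\operatorname{Re}(x^*Mx) = x^*Ax + a\,x^*Bx \ge x^*Ax$. Combining this with $|z|\ge \operatorname{Re}(z)$ and the Cauchy–Schwarz inequality $|x|\,|Mx| \ge |x^*Mx|$, I get $|x|\,|Mx| \ge x^*Ax \ge \lambda_{\min}(A)\,|x|^2$, where $\lambda_{\min}(A)>0$ is the smallest eigenvalue of $A$. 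Dividing by $|x|$ yields $|Mx| \ge \lambda_{\min}(A)\,|x|$ for every $x$, so $c = \lambda_{\min}(A)$ works and $\|(A+aB+ibB)^{-1}\| \le \lambda_{\min}(A)^{-1}$.

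Since $A$ and $B$ have constant coefficients, $\lambda_{\min}(A)$ is a fixed positive number, so the estimate holds uniformly in $a\ge 0$ and $b\in(-\infty,+\infty)$, which is precisely what is required. There is no serious obstacle here; the one point that genuinely must be respected is the sign condition $a\ge 0$, which is what guarantees that the term $a\,x^*Bx$ is nonnegative and therefore cannot degrade the real part of the quadratic form. (For $a<0$ the real part could vanish and one would have to fall back on the imaginary part $b\,x^*Bx$, which fails when $b=0$; thus $a\ge 0$ is used in an essential way.) This also explains why the present proposition is strictly stronger than the bare invertibility statement of Proposition \ref{lemmaA.1}: the quadratic-form argument delivers not merely invertibility but a quantitative, parameter-independent bound.
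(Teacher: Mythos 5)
Your argument is correct: the coercivity estimate $|x|\,|Mx|\ge |x^*Mx|\ge \operatorname{Re}(x^*Mx)=x^*Ax+a\,x^*Bx\ge \lambda_{\min}(A)\,|x|^2$ is valid for every complex vector $x$ (with $x^*$ the conjugate transpose, so that $x^*Ax$ and $x^*Bx$ are real and positive for $x\neq 0$), and it yields invertibility together with the uniform bound $\|(A+aB+ibB)^{-1}\|\le \lambda_{\min}(A)^{-1}$, independent of $a\ge 0$ and $b$. This is, however, a genuinely different route from the paper's. The paper proceeds by successive factorizations: it first writes $A+aB+ibB=A^{1/2}\bigl(I+a\tilde B_0+ib\tilde B_0\bigr)A^{1/2}$ with $\tilde B_0=A^{-1/2}BA^{-1/2}$ to reduce to $A=I$, then factors $(I+aB+ibB)^{-1}=(I+aB)^{-1/2}\bigl(I+i\tilde B\bigr)^{-1}(I+aB)^{-1/2}$ and diagonalizes the symmetric matrix $\tilde B$ by an orthogonal matrix, bounding each factor separately. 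Your numerical-range argument is shorter and more elementary, produces an explicit parameter-independent constant, needs only $B\ge 0$ (positive semidefinite), and with $a=0$ it simultaneously re-proves the invertibility statement of Proposition \ref{lemmaA.1}; what the paper's factorization buys instead is structural information about the inverse (it exhibits $(I+aB+ibB)^{-1}$ explicitly as a product of bounded factors, in the same style as the decomposition $\bar A+i\bar B$ used after Proposition \ref{lemmaA.1}), which is convenient for the surrounding computations but is not needed for the uniform bound itself.
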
 

\begin{proof}
Because of the relation 
$$
A+aB+ibB=A^{1/2}\bigg(I+a(A^{-1/2}BA^{-1/2})+ib(A^{-1/2}BA^{-1/2})\bigg)A^{1/2},
$$
we may assume $A=I$. Since $I+aB$ is symmetric positive definite for $a>0$, we have
\begin{align*}
(I+aB+ibB)^{-1}=&(I+aB)^{-1/2}\bigg[I+ib(I+aB)^{-1/2}B(I+aB)^{-1/2}\bigg]^{-1}(I+aB)^{-1/2} \\
           \equiv&(I+aB)^{-1/2}(I+i\tilde{B})^{-1}(I+aB)^{-1/2}, 
\end{align*}
where $\tilde{B}$ is again symmetric. 
Let $O$ be an orthonormal matrix such that $O^*\tilde{B}O=\tilde{\Lambda}_B$ with $\tilde{\Lambda}_B$ a real diagonal matrix. Then we have 
$$
I+i\tilde{B} = O(I + i \tilde{\Lambda}_B)O^* 
$$
and thereby the matrix $(I+i\tilde{B})^{-1}$ is uniformly bounded. Similarly, the matrix $(I+aB)^{-1/2}$ is uniformly bounded and hence the proof is complete.
\end{proof}
\end{appendices}

%\noindent{\bf Acknowledgments:}

%%%%%%%%%%%%%%%%%%%%%%%%%%%%%%%%%%%%%%%%%%%%%%%%%%%%%%%%

\end{document}